\title{Small scale creation for 2D free boundary Euler equations\\ with surface tension}
\author{Zhongtian Hu\thanks{
                  Department of Mathematics, Duke University, Durham, NC 90320, USA; email: zhongtian.hu@duke.edu}
        \and
        Chenyun Luo\thanks{
                      Department of Mathematics, the Chinese University of Hong Kong, Shatin, NT, Hong Kong; email: cluo@math.cuhk.edu.hk
                  }
                  \and
                  Yao Yao\thanks{Department of Mathematics, National University of Singapore, 119076 Singapore; email: yaoyao@nus.edu.sg}
        }
\newcommand{\ubar}[1]{\underaccent{\bar}{#1}}
\numberwithin{equation}{section}
\newtheorem{thm}{Theorem}[section]
\newtheorem{lem}[thm]{Lemma}
\newtheorem{prop}[thm]{Proposition}
\newtheorem{rmk}[thm]{Remark}
\definecolor{purple}{rgb}{0.5, 0, 1}
\definecolor{orange}{rgb}{1,.5,0}
\newcommand{\eps}{\varepsilon}
\newcommand{\Z}{\mathbb{Z}}
\newcommand{\T}{\mathbb{T}}
\newcommand{\calD}{\mathcal{D}}
\newcommand{\calH}{\mathcal{H}}
\newcommand{\calO}{\mathcal{O}}
\newcommand{\p}{\partial}
\newcommand{\nab}{\nabla}
 \newcommand{\tDt}{\widetilde{\calD_t}}
 \newcommand{\tGamma}{\widetilde{\Gamma}}
 \newcommand{\tNN}{\widetilde{\mathcal{N}}}
 \newcommand{\cnab}{\overline{\nabla}}
 \newcommand{\ctnab}{\widetilde{\cnab}}
\begin{document}
\newpage
\maketitle
\begin{abstract}
{In this paper, we study the 2D free boundary incompressible Euler equations with surface tension, where the fluid domain is periodic in $x_1$, and has finite depth.}  We construct initial data with a flat free boundary and arbitrarily small velocity, such that the gradient of vorticity grows at least double-exponentially for all times during the lifespan of the associated solution. This work generalizes the celebrated result by Kiselev--{\v{S}}ver{\'a}k \cite{kiselev2014small} to the free boundary setting. The free boundary introduces some major challenges in the proof due to the deformation of the fluid domain and the fact that the velocity field cannot be reconstructed from the vorticity using the Biot-Savart law. We overcome these issues by deriving uniform-in-time control on the free boundary and obtaining pointwise estimates on an approximate Biot-Savart law.
\end{abstract}
Keywords: free boundary Euler equations, water waves, surface tension, small scale creation\\
MSC codes: 35Q35, 76B45, 

\section{Introduction}
The 2D incompressible free boundary Euler equations describe the motion of a fluid in two dimensions with a free boundary separating the moving fluid region $\calD_t$ and the vacuum region. 
In the fluid region, the fluid velocity $u(t,x)$ and the pressure $p(t,x)$ satisfy the incompressible Euler equations:
\begin{equation}
    \label{eq:fbeulermodel}
    \begin{cases}
        \p_t u + u\cdot \nabla u + \nabla p = 0,& \text{ in } \calD_t,\\
        \nabla\cdot u = 0,& \text{ in } \calD_t.
    \end{cases}
\end{equation}
We consider the setting where the whole spatial domain is $\mathbb{T}\times\mathbb{R}_+$, where $\mathbb{T} = [-1,1)$ has periodic boundary condition. Assume the fluid domain $\calD_t$ consists of an upper moving boundary $\Gamma_t$ and a fixed flat bottom $\Gamma_b = \mathbb{T}\times\{x_2=0\}$.  Here the free boundary $\Gamma_t$ evolves according to the fluid velocity $u(t,x)$, namely, its normal velocity $V$ is given by
\begin{equation}\label{eq:kinematic}
V = u\cdot \mathcal{N} \quad \text{ on } \Gamma_t, 
\end{equation}
where $\mathcal{N}$ is the outward unit normal to $\Gamma_t$. Throughout this paper, we assume the presence of surface tension, i.e., the pressure on the free boundary obeys
\begin{equation}\label{bdry_t}
p=\sigma\calH \quad \text{ on } \Gamma_t,
\end{equation}
where $\sigma>0$ is the surface tension constant, and $\calH$ is the mean curvature of the free boundary. 
On the fixed boundary, we impose the no-flow boundary condition
\begin{equation}\label{bdry_b}
u\cdot n=0\quad \text{on}\,\,\Gamma_b,
\end{equation}
where $n=(0,-1)$ is the outward unit normal to $\Gamma_b$.  For simplicity, let the initial free boundary be a straight line $\Gamma_0 = \mathbb{T}\times \{x_2=2\}$, so the initial fluid domain is
\begin{equation}\label{reference domain}
\calD_0= \mathbb{T} \times (0, 2).
\end{equation}

The system \eqref{eq:fbeulermodel}--\eqref{bdry_b} is also referred to as the 2D capillary water wave system. 
This system has been under very active investigation for the past two decades. 
The local well-posedness for the free-boundary Euler equations with surface tension is well-known, which can be found in \cite{alazard2011water, castro2015well, coutand2007well, disconzi2019priori, disconzi2019lagrangian, MR2172858, SZ1, SZ2, SZ3}. 
Unlike the Euler equations in a fixed domain, the local well-posedness for free-boundary Euler equations does not come directly from the a priori estimate since the linearized equations lose certain symmetry on the moving boundary. This issue is resolved by introducing carefully designed approximate equations that are asymptotically consistent with the a priori estimate.
 In addition, for certain large initial data, it is known that the solution to the water wave system with or without surface tension can form a splash singularity in finite time; see \cite{castro2013finite, castro2012finite, castro2012splash,   coutand2014finite}.

Beyond local well-posedness, a natural question is whether solutions with small initial data stay small {for a longer period of time}.   For  \textit{irrotational} $u_0$ (i.e. $\nabla\times u_0 = 0$) in a domain with infinite depth, 
{a positive answer was given independently by Ifrim--Tataru \cite{ifrim2017lifespan} and Ionescu--Pusateri \cite{ionescu2018global} for an asymptotically flat free boundary,} where they showed that small initial data leads to a global-in-time small solution.  
{As for the case with a periodic free boundary, Ifrim--Tataru \cite{ifrim2017lifespan} proved that small data solutions of the infinite depth water waves in two space dimensions have at least cubic lifespan. In addition, Berti--Feola--Franzoi \cite{berti2021quadratic} proved a similar result but with a finite bottom. See also Berti--Delort \cite{berti2018almost}, in which the almost global existence of 2D gravity-capillary water waves is established, provided that additional symmetry conditions are imposed on the small initial data.}
The key strategy in the aforementioned works is to reduce the system\eqref{eq:fbeulermodel}--\eqref{bdry_t} to a new system of equations defined on the moving boundary $\Gamma_t$, owing to the fact that $u$ is both divergence- and curl-free. {See also Deng--Ionescu--Pausader--Pusateri \cite{deng2017global} for global-in-time irrotational solutions of the gravity-capillary water-wave system in 3D.} However, for \textit{rotational} $u_0$ it is unknown whether solutions with small initial data always remain small for all times. 
 
 The goal of this work is to give a negative answer to this question in the finite-depth case - namely, we construct smooth initial data with a flat free boundary and arbitrarily small velocity, where $\|\nabla\omega(t)\|_{L^\infty}$ grows at least double-exponentially for all times during the lifespan of the solution. 

For 2D Euler equations in a disk, such double-exponential growth of  $\|\nab\omega(t)\|_{L^\infty}$ was constructed in a celebrated result by 
Kiselev--{\v{S}}ver{\'a}k \cite{kiselev2014small}. Similar ideas were applied to the torus $\mathbb{T}^2$ by Zlato\v{s} \cite{zlatovs2015exponential} to obtain exponential growth of vorticity gradient, and applied to smooth domains with an axis of symmetry by Xu \cite{xu2016fast}. In this paper, we aim to extend the construction of \cite{kiselev2014small} to the free boundary setting.  Our main result is as follows, which is stated for the $\sigma=1$ case for simplicity: 

 \begin{thm}\label{thm:smallscale}
Consider the 2D free boundary Euler equations \eqref{eq:fbeulermodel}--\eqref{bdry_b} with $\sigma=1$, whose initial domain $\calD_0$ is given by \eqref{reference domain}. There exists a smooth velocity field $v_0 \in C^\infty(\calD_0)$ and universal constants $\eps_0, c_1, c_2 > 0$, such that for any $\eps\in (0,\eps_0)$, the solution\footnote{Here and throughout, a \textit{solution} always means a $H^s$--solution, for some fixed $s\geq 4$. Since our initial data is smooth, the local existence of such a solution is guaranteed by \cite{coutand2007well, MR2172858}. } to \eqref{eq:fbeulermodel}-\eqref{bdry_b} with initial velocity $u_0 := \eps v_0$ satisfies the following for its vorticity $\omega:=\partial_1 u_2-\partial_2 u_1$:
 \begin{equation}
\label{est:smallscale}
\|\nabla \omega(t,\cdot)\|_{L^\infty(\calD_t)}\ge \eps \exp(c_1 \exp(c_2 \eps t)) \quad \text{for all}\,\, t\in [0,T),
\end{equation}
where $T$ is the lifespan of the solution. 
\end{thm}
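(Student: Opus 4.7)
The strategy is to transplant the hyperbolic-scenario argument of Kiselev--Šverák \cite{kiselev2014small} to the moving-domain setting. I would pick $v_0\in C^\infty(\calD_0)$ so that its vorticity $\omega_0$ is smooth, odd in $x_1$, compactly supported in the interior of $\calD_0$, and satisfies $\omega_0\le -c<0$ on a prescribed subset of $\{x_1>0\}$ whose shape is chosen so that the Biot--Savart integral appearing below is bounded below by $c\log(1/|x|)$ near the origin (this is the standard Kiselev--Šverák configuration, rotated $90^\circ$). Because the reference domain \eqref{reference domain}, the no-flow condition \eqref{bdry_b}, and the capillary pressure \eqref{bdry_t} are all even in $x_1$, the odd-in-$x_1$ symmetry persists on the lifespan; thus $u_1(t,0,\cdot)\equiv 0$ and $\omega(t,0,\cdot)\equiv 0$ on $\{x_1=0\}$, and combined with $u_2(t,\cdot,0)\equiv 0$ from \eqref{bdry_b} the origin $(0,0)$ is a stagnation point of the Lagrangian flow for all $t$.

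Two analytic inputs are then required. \emph{Uniform control of $\Gamma_t$:} the conserved water-wave energy $\tfrac12\int_{\calD_t}|u|^2+\sigma\,\mathrm{Length}(\Gamma_t)$ together with $\|u_0\|_{L^2}=O(\eps)$ forces $\mathrm{Length}(\Gamma_t)-\mathrm{Length}(\Gamma_0)=O(\eps^2)$ uniformly on the lifespan. Writing $\Gamma_t=\{x_2=2+\eta(t,x_1)\}$, the arc-length formula gives $\|\p_1\eta(t,\cdot)\|_{L^2}=O(\eps)$; combined with a higher-regularity a priori estimate and Sobolev interpolation, $\calD_t$ remains a small $C^{1,\alpha}$-perturbation of $\calD_0$ for all $t$ in the lifespan. \emph{Approximate Biot--Savart near $(0,0)$:} decomposing $u=\nab^\perp\psi+h$ with $\psi|_{\p\calD_t}=0$, $-\Delta\psi=\omega$, and $h$ the incompressible harmonic piece carrying the boundary data, I would use the explicit reflected Green's function of the flat strip $\calD_0$ and the Kiselev--Šverák calculation to derive the pointwise formula
\[
u_1(t,x_1,x_2)\;=\;-\tfrac{4x_1}{\pi}\int_{Q(|x|)}\frac{y_1 y_2}{|y|^4}\,\omega(t,y)\,dy\;+\;R(t,x_1,x_2),
\]
valid for $(x_1,x_2)$ in a small wedge near the origin, with $Q(r)=\{(y_1,y_2)\in\calD_t:2r<y_1<\tfrac12,\,2r<y_2<\tfrac12\}$ and $|R|\le Cx_1\|\omega(t)\|_{L^\infty}$. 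A near-identity straightening of $\Gamma_t$ reduces the moving $\calD_t$ to a perturbation of $\calD_0$, with the perturbative error controlled by the surface bound above and absorbable in $R$; the harmonic $h$ vanishes at $(0,0)$ by symmetry and contributes $O(|x|)$ without a logarithm. Contributions from the far boundary $\Gamma_t$ and from the periodic images in $\mathbb{T}$ are at distance $\gtrsim 1$ from the origin and likewise absorbable in $R$.

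By the choice of $\omega_0$ and Lagrangian conservation of $\omega$ along the flow, the main integral above stays $\gtrsim \log(1/x_1)$ on the lifespan for $x_1\in(0,x_*)$, yielding $u_1(t,x_1,x_2)\le -c\eps\,x_1\log(1/x_1)$ in the same wedge. Tracking a Lagrangian trajectory $\Phi_t(y_0)=(X(t),Y(t))$ starting at a point $y_0$ close to the symmetry axis on which $\omega_0(y_0)\le-c<0$, as long as $\Phi_t(y_0)$ stays in the wedge one has $\dot X\le -c\eps\,X\log(1/X)$, which integrates to $X(t)\le \exp(-c_1\exp(c_2\eps t))$. Since $|\omega(t,\Phi_t(y_0))|=|\omega_0(y_0)|\ge c$ by Lagrangian conservation while $\omega(t,0,Y(t))\equiv 0$ by symmetry, the mean-value inequality produces $\|\p_1\omega(t,\cdot)\|_{L^\infty(\calD_t)}\ge c/X(t)\ge\eps\exp(c_1\exp(c_2\eps t))$, which is \eqref{est:smallscale}. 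The main obstacle is the approximate Biot--Savart step: the Green's function of $\calD_t$ is not explicit and $u$ is not simply $\nab^\perp\psi$ when the top boundary moves, so one must control the harmonic correction $h$ and the surface-deformation errors with enough precision to preserve the $-x_1\log(1/x_1)$ leading behavior rather than merely $O(x_1\log(1/x_1))$ with an uncontrolled sign. A secondary difficulty is closing the surface bootstrap uniformly in $t$ despite the growth of $\|\nab\omega\|_{L^\infty}$; this is possible because $\omega$ enters the water-wave energy identity only through $L^2$-type norms that are conserved under the flow.
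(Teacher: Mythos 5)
Your outline captures the correct skeleton (odd symmetry, hyperbolic stagnation point, a Kiselev--\v{S}ver\'{a}k type Biot--Savart lower bound, and the Lagrangian argument), but there is a genuine gap in the two analytic inputs you single out, and it is exactly where the paper's argument differs.

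\emph{On the uniform control of $\Gamma_t$.} Energy conservation $K(t)+\sigma L(t)=K(0)+\sigma L(0)$ only yields $L(t)-L(0)=O(\eps^2)$ and $K(t)\le K(0)$. From the length bound you may deduce $\|\p_1\eta\|_{L^2}=O(\eps)$ \emph{if} $\Gamma_t$ is already known to be a graph, but upgrading this to a $C^{1,\alpha}$-perturbation of $\calD_0$ ``by a higher-regularity a priori estimate and Sobolev interpolation'' is not available: the higher-order a priori estimates are not uniform on the maximal lifespan, and indeed the whole point of the theorem is that $\|\nabla\omega\|_{L^\infty}$ (and hence the data driving any such estimate) may become large. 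Your closing remark about a ``surface bootstrap'' closing because ``$\omega$ enters the energy identity only through $L^2$-norms'' does not supply the missing $C^{1,\alpha}$ control. In the paper, only a much weaker consequence of the energy law is used: since $L(t)\le L(0)+K(0)/\sigma\le 2.05$, area conservation, and the fact that the curve's horizontal projection covers $\mathbb{T}$, one gets the crude but time-uniform $L^\infty$ confinement $\Gamma_t\subset\mathbb{T}\times(3/2,5/2)$ (Proposition~\ref{prop:bdryctrl}). No graph structure, no $C^{1,\alpha}$ regularity, and no straightening map are needed.

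\emph{On the approximate Biot--Savart law.} Because you set up the stream-function problem on the moving domain $\calD_t$, you must compare its Green's function with that of $\calD_0$ via a near-identity straightening, and then control both the straightening error and the harmonic correction $h$; both of these require the unavailable quantitative control of $\Gamma_t$. The paper sidesteps this entirely by freezing the domain: it defines $U=\nabla^\perp\Phi$ where $\Delta\Phi=\omega$ is solved on the \emph{fixed} subdomain $\Omega=\mathbb{T}\times[0,1]$ (which stays inside $\calD_t$ by the confinement bound above) with $\Phi=0$ on $\partial\Omega$. The error $e=u|_\Omega-U$ is then automatically divergence- and curl-free in $\Omega$ with $e\cdot n=0$ on $\Gamma_b$, so $e=\nabla^\perp F$ with $F$ harmonic in $\Omega$ and $F=0$ on $\Gamma_b$. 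The crucial observation (Proposition~\ref{prop_e}) is the orthogonality $\int_\Omega\nabla\Phi\cdot\nabla F\,dx=0$ (from $\Phi|_{\p\Omega}=0$ and $\Delta F=0$), which gives
\[
\|\nabla\Phi\|_{L^2(\Omega)}^2+\|\nabla F\|_{L^2(\Omega)}^2=\|u\|_{L^2(\Omega)}^2\le 2K(0),
\]
hence $\|F\|_{H^1(\Omega)}\lesssim\sqrt{K(0)}$ by Poincar\'e, and then interior harmonic regularity (after Schwarz reflection across $\Gamma_b$) yields the pointwise bound $\|\nabla e\|_{L^\infty(B_{1/2}\cap\Omega)}\lesssim\sqrt{K(0)}$. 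Because $\Omega$ is a flat torus strip, the Biot--Savart formula for $U$ is explicit (odd--odd extension to $\mathbb{T}^2$) and the Zlato\v{s} estimate applies directly, giving the leading $\frac{4x_j}{\pi}\int_{Q(2x)}y_1y_2|y|^{-4}\omega\,dy$ term with bounded remainder (Proposition~\ref{prop:key}). So where you try to perturb off the moving domain -- which you yourself flag as ``the main obstacle'' -- the paper replaces that step with an energy-orthogonality argument on a fixed domain that requires only the quantities the energy law actually controls.
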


\begin{rmk}
(1) In other words, we have constructed smooth small initial data of size $\eps\ll 1$, such that $\|u(t)\|_{W^{2,\infty}}$ grows to order one by time $O(\eps^{-1} \ln \ln \eps^{-1})$, unless a singularity occurs before this time. {That is, we have demonstrated nonlinear instability for a class of \textbf{rotational} initial data in their respective lifespans,
which is a sharp contrast to the irrotational case 
\cite{berti2018almost, berti2021quadratic, deng2017global, ifrim2017lifespan, ionescu2018global}}. This result in 2D can be readily extended to the periodic 3D setting, by setting $u_0$ independent of the $x_3$ variable. 

(2) Theorem~\ref{thm:smallscale} can be easily generalized to all $\sigma>0$ (with $\eps_0, c_1, c_2$ depending on $\sigma$ now). A simple scaling argument shows that if $(u(t,\cdot), \calD_t)$ is a solution to \eqref{eq:fbeulermodel}--\eqref{bdry_b} with $\sigma=1$, then $(\sqrt{\sigma} u( \sqrt{\sigma} t,\cdot), \calD_{\sqrt{\sigma}t})$ solves \eqref{eq:fbeulermodel}--\eqref{bdry_b} for a given $\sigma>0$.
\end{rmk}

 \begin{figure}[thbp]
 \begin{center}
 \includegraphics[scale=0.8]{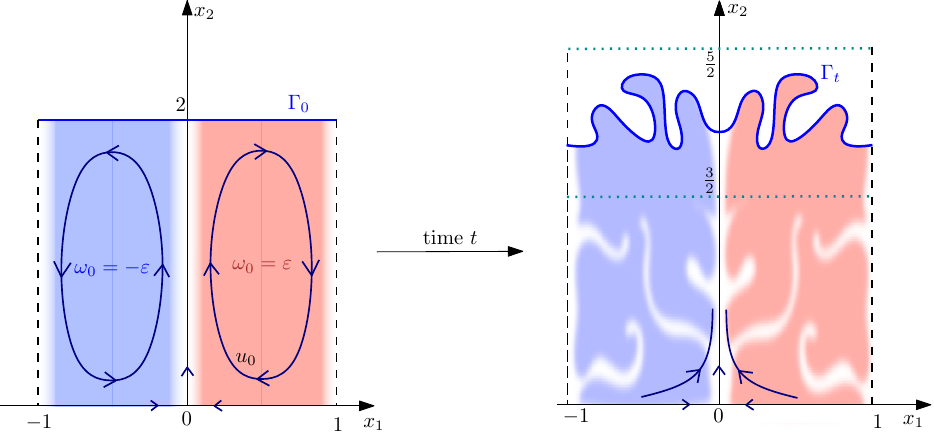}
 \end{center}
 \caption{\label{fig1} Illustrations of our initial data $u_0,\Gamma_0$ and its evolution after time $t$. Here the red and blue colors represent positive and negative vorticity respectively. As we show in Proposition~\ref{prop:bdryctrl}, for small initial velocity, the free boundary $\Gamma_t$ will be confined within $\frac32 < x_2 < \frac52$ for all time during the lifespan of a solution. This allows us to estimate $u(t,\cdot)$ near the point $(0,0)$.}
 \end{figure}

To prove Theorem~\ref{thm:smallscale}, a natural starting point is to enforce the same symmetry as \cite{kiselev2014small}, with $u_{01}$ odd-in-$x_1$ and $u_{02}$ even-in-$x_1$ respectively. One can easily check that such symmetry holds for all times (so vorticity remains odd-in-$x_1$ for all times). The proof is standard and we include it in Lemma~\ref{lem: symmetry} for the sake of completeness. In addition, if we set $\omega_0=1$ at most of the points in the right half of $\calD_0$ (except a small measure, since $\omega_0$ has to smoothly transition to 0 at $x_1=0$ and $x_1=1$ due to its oddness), one can check that such property also holds for $\calD_t$, since in the free boundary setting the vorticity is also preserved along the characteristics. 

However, despite these similarities, one faces two major challenges to adapt the proof of \cite{kiselev2014small} to the free boundary setting:

 The first issue is the deformation of the domain. Since $\omega_0\not\equiv 0$ and $\calD_t$ is evolving in time, it may deform a lot from the initial domain $\calD_0$. In general, the free boundary $\Gamma_t$ might get very close to the origin, and the nonlinear coupling between $\Gamma_t$'s evolution and the velocity field in the bulk of the fluid could destroy the small-scale creation mechanism near the origin in \cite{kiselev2014small}. That being said, we show that this can never happen at any time for small initial data. This is because the free boundary Euler equation with surface tension is known to have a conserved energy $E(t)=K(t)+\sigma L(t)$, where $K(t)$ is the kinetic energy and $L(t)$ is the length of the free boundary. (The energy conservation was shown in \cite{MR2172858, SZ1}, and we derive it in Proposition~\ref{prop: conserved} for the sake of completeness.) Using this conserved energy, we make the simple but important observation that a flat initial free boundary and a small initial kinetic energy guarantees that the free boundary $\Gamma_t$ always stays close to $\Gamma_0$, thus can never get close to the origin -- see Proposition~\ref{prop:bdryctrl} for a precise statement, and see Figure~\ref{fig1} for an illustration.

A more serious problem is the lack of Biot-Savart law in the free boundary setting. Recall that for a \emph{fixed domain} $D$, given the vorticity $\omega(t,\cdot)$ in $D$ at any moment, the velocity field $u(t,\cdot)$ is uniquely determined by the Biot-Savart law $u = \nabla^\perp\varphi$, where the stream function $\varphi$ solves the elliptic equation $\Delta\varphi = \omega$ in $D$ with $\varphi=0$ on $\partial D$. This Biot-Savart law was crucial in \cite{kiselev2014small} to derive pointwise estimates of $u(t, \cdot)$. 
In contrast, in the \emph{free boundary} setting, even with $\omega(t,\cdot)$ and $\calD_t$ given at some time $t$, it is not sufficient to uniquely determine $u(t,\cdot)$ -- one also needs to know the normal velocity of the free boundary to determine $u(t,\cdot)$ in the fluid domain. To overcome this challenge, we show that $u(t,\cdot)$ can still be somewhat determined by $\omega(t,\cdot)$ by an \emph{approximate Biot-Savart law} in Section~\ref{sec_approx}, which contains an error term that remains regular and small for all times near the origin. This allows us to obtain a pointwise estimate of $u$ similar to the key lemma in \cite[Lemma 3.1]{kiselev2014small}, leading to the double-exponential growth of $\|\nabla\omega(t)\|_{L^\infty(\calD_t)}$.

\medskip

%
%
%
%

 \subsection*{Notations}
 \begin{itemize}
 \item Let $B_r$ be the open disk centered at the origin with radius $r$. In Section 3, we define $\Omega := \mathbb{T}\times[0,1]$. We also define $\Omega^+$ and $\calD_t^+$ as the right half of $\Omega$ and $\calD_t$, i.e. $\Omega^+ := [0,1] \times[0,1]$ and $\calD_t^+ := \calD_t \cap \{x_1 \in [0,1]\}$.
 \item We denote by $C$ universal constants whose values may change from line to line. Any constants with subscripts, such as $C_i$, stay fixed once they are chosen.
 \end{itemize}
 
 \subsection*{Acknowledgements}
 
 CL is supported by the Hong Kong RGC grant No. CUHK--24304621 and CUHK--14302922. YY is partially supported by the NUS startup grant, MOE Tier 1 grant A-0008491-00-00, and the Asian Young Scientist Fellowship. ZH {acknowleges  partial support of the NSF-DMS grants 2006372 and 2306726}; he also thanks the hospitality of the Chinese University of Hong Kong. The authors thank Tarek Elgindi for suggesting this problem, and Alexander Kiselev for helpful discussions. {Finally, we thank the anonymous referees for helpful comments which improved the presentation of this paper.}

\section{Preliminary results}
In this section, we collect a few preliminary results on the free boundary Euler equations with surface tension.  In Section~\ref{subsect:symmetry}, We first demonstrate a symmetry to which 2D free boundary Euler equations conform. Such symmetry corresponds to the odd-in-$x_1$ symmetry of vorticity in the fixed boundary case \cite{kiselev2014small}, and is crucial to our construction. In Section~\ref{subsect:conserved}, we show the conservation of vorticity and an energy balance involving the bulk kinetic energy as well as the length of the free boundary.

\subsection{Symmetry in  2D free boundary Euler equations}\label{subsect:symmetry}
To begin with, we discuss some symmetry properties of the 2D free boundary Euler equations. For the 2D Euler equation in fixed domains, the conservation of odd-in-$x_1$ symmetry in vorticity is crucial in the proof of small scale formations, as seen in \cite{kiselev2014small, xu2016fast, zlatovs2015exponential}.
Below we show that a similar symmetry is also preserved for free boundary Euler equations; the difference is that we state the symmetry assumptions in terms of the velocity rather than the vorticity, since for the free boundary Euler equation one cannot uniquely determine the velocity using the vorticity at a given moment due to the kinematic boundary condition \eqref{eq:kinematic}.

\begin{lem}\label{lem: symmetry}
Let $(u_0, \calD_0)$ be the initial data of \eqref{eq:fbeulermodel}--\eqref{bdry_b}, where $\calD_0$ is given by \eqref{reference domain} and $u_0 = (u_{01}, u_{02})$ satisfies 
\begin{equation}\label{symmetry t=0}
u_{01} (-x_1, x_2) = -u_{01} (x_1, x_2), \quad u_{02} (-x_1, x_2) =u_{02} (x_1, x_2). 
\end{equation}
Then for all time during the lifespan of a solution, the solution $(u, \calD_t)$ satisfies the same symmetry, i.e.
\begin{equation}\label{symmetry u}
-u_1 (t, -x_1, x_2) = u_1 (t, x_1, x_2), \quad u_2 (t, -x_1, x_2) = u_2 (t, x_1, x_2),
\end{equation}
and the moving fluid domain $\calD_t$ remains even in $x_1$, i.e.,
\begin{align}
\calD_t = \tDt:=\{(-x_1, x_2) : (x_1, x_2)\in \calD_t\}.
\end{align}
\end{lem}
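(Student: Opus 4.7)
The plan is to exploit the reflection invariance of the system by defining a ``reflected solution'' from $(u, p, \calD_t)$, verifying that it solves the same initial value problem, and then invoking the uniqueness part of local well-posedness. Concretely, let $R$ denote the horizontal reflection $R(x_1, x_2) = (-x_1, x_2)$ (acting both on points and on vectors), and define
\begin{equation*}
\tilde u(t, x) := R\, u(t, Rx), \qquad \tilde p(t, x) := p(t, Rx), \qquad \tDt := R(\calD_t), \qquad \tGamma_t := R(\Gamma_t).
\end{equation*}
Componentwise this reads $\tilde u_1(t, x_1, x_2) = -u_1(t, -x_1, x_2)$ and $\tilde u_2(t, x_1, x_2) = u_2(t, -x_1, x_2)$.

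The first step is to verify that $(\tilde u, \tilde p, \tDt)$ satisfies \eqref{eq:fbeulermodel}--\eqref{bdry_b}. The chain rule, together with the facts that $R$ is a linear isometry with $R^2 = \id$ and that $R$ preserves the inner product, gives $\nabla \cdot \tilde u \equiv (\nabla \cdot u)\circ R \equiv 0$ and, after a short computation tracking minus signs, $\partial_t \tilde u + (\tilde u \cdot \nabla)\tilde u + \nabla \tilde p \equiv R\bigl[(\partial_t u + (u\cdot \nabla)u + \nabla p)\circ R\bigr] \equiv 0$ in $\tDt$. For the boundary conditions: the outward normal to $\tGamma_t$ at $x$ is $\tilde{\mathcal N}(x) = R\,\mathcal N(Rx)$, and the mean curvature is invariant under isometry, so $\tilde{\mathcal H}(x) = \mathcal H(Rx)$. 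Combining these with $\tilde u(t,x)\cdot \tilde{\mathcal N}(x) = u(t, Rx)\cdot \mathcal N(Rx)$ yields both the kinematic condition \eqref{eq:kinematic} and the dynamic condition $\tilde p = \sigma \tilde{\mathcal H}$ on $\tGamma_t$. The bottom condition is immediate because $\Gamma_b$ is $R$--invariant, $n = (0, -1)$ is fixed by $R$, and hence $\tilde u \cdot n = (Ru)\cdot n = u\cdot (R^{-1} n) = u\cdot n = 0$ on $\Gamma_b$.

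Next I verify that $(\tilde u, \tDt)$ shares the same initial data as $(u, \calD_t)$. The reference domain $\calD_0 = \mathbb{T}\times (0,2)$ is clearly $R$--invariant, so $\tDt|_{t=0} = \calD_0$; and the initial velocity hypothesis \eqref{symmetry t=0} is exactly the statement $R u_0 \circ R = u_0$, so $\tilde u(0, \cdot) = u_0$. At this point, local uniqueness of the $H^s$--solution to \eqref{eq:fbeulermodel}--\eqref{bdry_b} (cf.\ \cite{coutand2007well, MR2172858}) forces $(\tilde u, \tDt) = (u, \calD_t)$ for all $t$ in the lifespan of the solution, which is precisely \eqref{symmetry u} together with $\calD_t = \tDt$.

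The only mildly delicate point is ensuring that the reflected object $(\tilde u, \tilde p, \tDt)$ actually lies in the function space in which the uniqueness statement is formulated, but since $R$ is a smooth isometry this is automatic: $\tilde u$ and $\tilde p$ inherit the same Sobolev regularity as $u$ and $p$, and $\tGamma_t$ inherits the regularity of $\Gamma_t$. Beyond that, the argument is entirely a symmetry/uniqueness argument and involves no new estimates.
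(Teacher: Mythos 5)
Your proof is correct and follows essentially the same approach as the paper: define the reflected triple $(\tilde u, \tilde p, \tDt)$, verify by direct computation that it solves \eqref{eq:fbeulermodel}--\eqref{bdry_b} with the same initial data, and invoke uniqueness. The paper does this componentwise while you phrase it more compactly via the reflection operator $R$, but the content is identical.
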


\begin{rmk}
As a direct consequence of \eqref{symmetry u}, we know the vorticity $\omega(t,x) = \nabla^\perp \cdot u(t,x)$ stays odd in $x_1$ for all time during the lifespan of a solution.
\end{rmk}
\begin{proof}
First, setting 
\begin{align}
v (t, x_1, x_2) = (v_1 (t, x_1, x_2), v_2 (t, x_1, x_2)) &= (-u_1(t, -x_1, x_2), u_2(t, -x_1, x_2)),\label{v}\\
q (t, x_1, x_2) &= p(t, -x_1, x_2), \label{q}
\end{align}
it suffices to show that $(v, q, \tDt)$ also verifies the system \eqref{eq:fbeulermodel}--\eqref{bdry_b} due to uniqueness of solution. Fixing $(x_1,x_2) \in \tDt$, a direct computation shows that
\begin{align*}
(\p_t v_1 + v \cdot \nab v_1 + \p_1 q)|_{(t, x_1, x_2)} &= - (\p_t u_1 + u \cdot \nab u_1 +\p_1 p) |_{(t, -x_1, x_2)},\\
(\p_t v_2 + v \cdot \nab v_2 + \p_2 q)|_{(t, x_1, x_2)} &=  (\p_t u_2 + u \cdot \nab u_2 +\p_2 p) |_{(t, -x_1, x_2)},
\end{align*}
which implies that $\p_t v + v\cdot \nab v + \nab q = 0$ in $\tDt$. Similarly, we have 
$$
\nab \cdot v|_{(t, x_1, x_2)} = \nab \cdot u|_{(t, -x_1, x_2)} = 0.
$$
Second, we need to check the boundary conditions.
Since $\p\tDt= \tGamma_t\cup \tGamma_b$, where
\begin{align}\label{symmetry Gamma_t}
\tGamma_t = \{ (x_1, x_2): (-x_1, x_2)\in \Gamma_t\},
\end{align} 
and 
 $\tGamma_b = \Gamma_b$, then it is straightforward to check that  $v\cdot n = 0$ on  $\tGamma_b$. 
 
Furthermore, denoting by $\tNN=(\tNN_1, \tNN_2)$ the outward unit normal to $\tGamma_t$, we infer from \eqref{symmetry Gamma_t} that
\begin{equation}\label{symm N}
\tNN_1 (t, x_1, x_2) = -\mathcal{N}_1(t, -x_1, x_2), \quad \tNN_2 (t, x_1, x_2) = \mathcal{N}_2(t, -x_1, x_2).
\end{equation}
This yields $v\cdot \tNN |_{(t,x_1,x_2)} = u\cdot \mathcal{N}|_{(t, -x_1, x_2)}$.

Finally, we define $\widetilde{\mathcal{H}}$ to be the mean curvature of $\tGamma_t$. By definition, 
$\mathcal{H} = \cnab\cdot \mathcal{N}, $ where $\cnab$ is the spatial derivative tangent to $\Gamma_t$, whose components read 
\begin{align*}
\cnab_j = \nab_j - \mathcal{N}_j (\mathcal{N}\cdot \nab),\quad j=1,2.
\end{align*} 
This implies 
$\widetilde{\mathcal{H}} = \ctnab \cdot \tNN,
$
where $\ctnab_j = \nab_j - \tNN_j (\tNN\cdot \nab)$. Then, in light of \eqref{symm N}, a direct computation yields that for any $(x_1,x_2) \in \Gamma_t$,
\begin{equation}
\widetilde{\mathcal{H}}(t, x_1, x_2)=\ctnab\cdot \tNN|_{(t, x_1, x_2)}=\cnab \cdot \mathcal{N}|_{(t, -x_1, x_2)} = \mathcal{H}(t, -x_1, x_2).
\end{equation}
Thanks to \eqref{q}, we have
\begin{equation*}
q =\sigma \widetilde{\mathcal{H}},\quad \text{ on }\tGamma_t.
\end{equation*}
This concludes the proof. 
\end{proof}

\subsection{Conservation of vorticity and a conserved $L^2$-energy}\label{subsect:conserved}
In this subsection, we aim to show two conserved quantities satisfied by the free-boundary Euler equations \eqref{eq:fbeulermodel} on both vorticity and velocity sides. The first result below shows that, identical to the classical fixed-boundary Euler equations, any $L^p$ norm of the vorticity is conserved. 
\begin{prop}
\label{prop:vortconserv}
For any $1\leq p \leq \infty$, we have $\|\omega(t,\cdot)\|_{L^p(\calD_t)} = \|\omega_0\|_{L^p(\calD_0)}$ for all times during the lifespan of the solution.
\end{prop}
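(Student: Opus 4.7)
The plan is to reduce the statement to the classical transport-plus-incompressibility argument, carried out with care so that the free boundary plays no essential role beyond ensuring that the Lagrangian flow maps $\calD_0$ onto $\calD_t$. First I would take the scalar curl of the momentum equation in \eqref{eq:fbeulermodel}. Since $\nabla^\perp \cdot \nabla p = 0$ and, in two dimensions, $\nabla^\perp \cdot (u\cdot \nabla u) = u \cdot \nabla \omega$ whenever $\nabla \cdot u = 0$ (the vortex stretching term vanishes), I would obtain the transport equation
\begin{equation*}
\partial_t \omega + u \cdot \nabla \omega = 0 \quad \text{in } \calD_t.
\end{equation*}
No boundary condition is required for $\omega$ because this equation is purely transported by $u$ along characteristics.

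Next I would introduce the Lagrangian flow map $X : [0,T)\times \calD_0 \to \R^2$ defined by $\partial_t X(t,\alpha) = u(t, X(t,\alpha))$ with $X(0,\alpha) = \alpha$. Two facts need to be checked: $(i)$ for each $t$ in the lifespan, $X(t,\cdot)$ is a diffeomorphism from $\calD_0$ onto $\calD_t$, and $(ii)$ the Jacobian satisfies $\det \nabla_\alpha X(t,\alpha) \equiv 1$. Property $(ii)$ follows from Liouville's formula applied to $\nabla \cdot u = 0$. For property $(i)$, the kinematic condition \eqref{eq:kinematic} is exactly the statement that the free boundary $\Gamma_t$ is the image of $\Gamma_0$ under $X(t,\cdot)$, while the no-flow condition \eqref{bdry_b} ensures that particles starting on the flat bottom stay on $\Gamma_b$; combined with the interior ODE, this shows $X(t,\cdot)(\calD_0) = \calD_t$. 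These verifications are standard, so I would state them briefly and refer to the well-posedness papers already cited in the introduction.

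With the flow map in hand, integrating the transport equation along characteristics gives $\omega(t, X(t,\alpha)) = \omega_0(\alpha)$ for every $\alpha \in \calD_0$. For $1 \le p < \infty$, I would then change variables:
\begin{equation*}
\int_{\calD_t} |\omega(t,x)|^p \, dx = \int_{\calD_0} |\omega(t, X(t,\alpha))|^p \, \det \nabla_\alpha X(t,\alpha) \, d\alpha = \int_{\calD_0} |\omega_0(\alpha)|^p \, d\alpha,
\end{equation*}
using $(i)$ and $(ii)$ above. The case $p = \infty$ is immediate from the pointwise identity $\omega(t, X(t,\alpha)) = \omega_0(\alpha)$ together with the surjectivity of $X(t,\cdot)$ onto $\calD_t$.

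I do not anticipate a genuine obstacle here: the only substantive point is establishing that $X(t,\cdot)$ maps $\calD_0$ bijectively onto $\calD_t$, which is the content of the kinematic boundary condition together with the local-in-time regularity of $u$ (and is implicit in the local well-posedness theory of \cite{coutand2007well, MR2172858} that we are already invoking). Everything else is the same calculation as for the fixed-domain 2D Euler equation.
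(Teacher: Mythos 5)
Your argument is correct and is essentially the same as the paper's: take the curl of the momentum equation to obtain the transport equation $\partial_t\omega + u\cdot\nabla\omega = 0$, then use incompressibility to conclude conservation of $L^p$ norms. The paper states the last step tersely ("this vorticity equation together with the divergence-free property of $u$ yields the result"), whereas you spell out the Lagrangian flow map, the Jacobian, and the change of variables, but this is an expansion of the same reasoning rather than a different route.
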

\begin{proof}
By applying the operator $\nabla^\perp\cdot$ to the velocity equation in \eqref{eq:fbeulermodel} and using the divergence-free property of $u$, $\omega$ satisfies the following transport equation
$$
\p_t \omega + u\cdot \nabla \omega = 0 \quad\text{ in } \calD_t.
$$
This vorticity equation together with the divergence-free property of $u$ yields the result.
\end{proof}

The following proposition shows that free boundary Euler equations with surface tension have a conserved $L^2$-energy. It plays a pivotal role in quantifying the constraining effect of the surface tension on the behavior of the free boundary, as we will see in Section~\ref{subsec_uniform}.
\begin{prop}\label{prop: conserved}
Let
\begin{align}
E(t) := K(t) + \sigma L(t), 
\end{align} 
where
\[K(t) := \frac{1}{2}\int_{\calD_t} |u(t,x)|^2\,dx \quad\text{ and }\quad L(t):=\int_{\Gamma_t} dS_t
\]
are the kinetic energy of the fluid and the length of $\Gamma_t$ respectively. Then 
\begin{align}\label{conserved energy}
E(t) = E(0)
\end{align}
for all times during the lifespan of the solution.
\end{prop}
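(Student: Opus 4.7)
The plan is to differentiate $K(t)$ and $L(t)$ separately in time and show that their time derivatives exactly cancel after using the boundary conditions \eqref{bdry_t}--\eqref{bdry_b}. Since the solution is sufficiently regular (an $H^s$--solution with $s \geq 4$), Reynolds transport theorem is applicable on the moving domain, and one can parametrize $\Gamma_t$ smoothly in order to differentiate its length.

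For the kinetic energy, I would apply Reynolds transport theorem, noting that $\calD_t$ moves with normal velocity $u\cdot \mathcal{N}$ on $\Gamma_t$ and zero normal velocity on $\Gamma_b$, to write
\begin{equation*}
\frac{d}{dt}K(t)=\int_{\calD_t}u\cdot \partial_t u\,dx+\frac{1}{2}\int_{\Gamma_t}|u|^2\,(u\cdot \mathcal{N})\,dS_t.
\end{equation*}
Substituting the Euler equation $\partial_t u=-u\cdot\nabla u-\nabla p$ and using $\nabla\cdot u=0$, the first term becomes $-\frac12\int_{\calD_t}\nabla\cdot(u|u|^2)\,dx-\int_{\calD_t}\nabla\cdot(up)\,dx$. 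Applying the divergence theorem on $\partial\calD_t=\Gamma_t\cup\Gamma_b$, the contributions on $\Gamma_b$ vanish by \eqref{bdry_b}, and the boundary term $-\frac12\int_{\Gamma_t}|u|^2(u\cdot \mathcal{N})\,dS_t$ cancels the kinematic contribution from Reynolds. Inserting the surface tension condition \eqref{bdry_t}, one obtains
\begin{equation*}
\frac{d}{dt}K(t)=-\sigma\int_{\Gamma_t}\mathcal{H}\,(u\cdot \mathcal{N})\,dS_t.
\end{equation*}

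For the length, I would use the first variation of arclength for a hypersurface moving with normal velocity $V=u\cdot \mathcal{N}$ (as in \eqref{eq:kinematic}). With the sign convention $\mathcal{H}=\cnab\cdot \mathcal{N}$ adopted in the proof of Lemma~\ref{lem: symmetry}, this reads
\begin{equation*}
\frac{d}{dt}L(t)=\int_{\Gamma_t}\mathcal{H}\,(u\cdot \mathcal{N})\,dS_t,
\end{equation*}
which I would verify directly by choosing a smooth parametrization $X(t,\alpha)$ of $\Gamma_t$ whose normal component of $\partial_t X$ equals $u\cdot\mathcal{N}$ (the tangential component does not affect the length), differentiating $dS_t=|\partial_\alpha X|\,d\alpha$, and integrating by parts along $\Gamma_t$. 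Combining the two identities yields $\frac{d}{dt}(K+\sigma L)=0$, and integrating in time gives \eqref{conserved energy}.

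The one place requiring care is the length derivative: the sign convention for $\mathcal{H}$ must be matched to the one used in \eqref{bdry_t} so that the two boundary integrals cancel rather than add. I would double-check this by testing on a shrinking disk (where $\mathcal{H}>0$ and $L$ decreases when $u\cdot \mathcal{N}<0$). The rest is a routine application of the divergence theorem together with the boundary conditions \eqref{bdry_t}--\eqref{bdry_b}, which is why this energy identity is expected; the free boundary does not introduce any genuine new difficulty once the geometric first variation formula is in hand.
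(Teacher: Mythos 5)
Your proposal is correct and follows essentially the same route as the paper: both differentiate $K(t)$ using the transport theorem and the Euler equation, reduce the boundary term to $-\sigma\int_{\Gamma_t}\mathcal{H}(u\cdot\mathcal{N})\,dS_t$ via \eqref{bdry_t}--\eqref{bdry_b}, and cancel it against the first variation of arclength. The only cosmetic difference is that you carry the Reynolds boundary term $\tfrac12\int_{\Gamma_t}|u|^2(u\cdot\mathcal{N})\,dS_t$ explicitly and show its cancellation against the convective term, whereas the paper folds both into the single line $\frac{d}{dt}K(t)=\int_{\calD_t}(\partial_t u+u\cdot\nabla u)\cdot u\,dx$; your sign check for $\mathcal{H}$ on a shrinking disk agrees with the convention $\mathcal{H}=\cnab\cdot\mathcal{N}$ used in the paper.
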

\begin{proof}
We will verify the identity \eqref{conserved energy} by direct computation. We start from
$$
\frac{d}{dt} K(t) = \int_{\calD_t} (\p_t u +u\cdot \nab u) \cdot u\, dx = -\int_{\calD_t} \nab p \cdot u\,dx,
$$
and then apply the divergence theorem and the boundary conditions to obtain
$$
-\int_{\calD_t} \nab p \cdot u\,dx = -\int_{\Gamma_t} p (u\cdot \mathcal{N})\,dS_t - \int_{\Gamma_b} p \underbrace{(u\cdot n)}_{=0}\,dx_1 + \int_{\calD_t} p\underbrace{(\nab\cdot u)}_{=0}\,dx = -\int_{\Gamma_t} p (u\cdot \mathcal{N})\,dS_t.
$$
 Now,  invoking the boundary conditions $p=\sigma\mathcal{H}$, and $u\cdot \mathcal{N} = V$ on $\Gamma_t$, we have
\begin{align}\label{boundary pun}
-\int_{\Gamma_t} p (u\cdot \mathcal{N})\,dS_t = -\int_{\Gamma_t} \sigma\mathcal{H} V \,dS_t. 
\end{align}
On the other hand, since $\frac{d}{dt}\int_{\Gamma_t} dS_t = \int_{\Gamma_t} \mathcal{H}(u\cdot \mathcal{N})\,dS_t$ (whose proof can be found in \cite[Chapter 4]{ecker2012regularity}), we have
$$
\frac{d}{dt} L(t) =  \int_{\Gamma_t}  \mathcal{H}V\,dS_t.
$$
Combining this with \eqref{boundary pun}, we arrive at
$$
\frac{d}{dt} E(t) = \frac{d}{dt} (K(t) + \sigma L(t))=0,
$$
finishing the proof.
\end{proof}

\section{Uniform-in-time estimates for the free boundary problem}

In this section, we obtain some uniform-in-time estimates of the free boundary and the velocity field, which are at the heart of the proof of the main theorem of the paper.

\subsection{Uniform-in-time control of the free boundary and kinetic energy}
\label{subsec_uniform}
The following result shows that if the initial free boundary is flat and the initial kinetic energy is sufficiently small, the free boundary $\Gamma_t$ stays constrained in a small neighborhood around the initial profile $\Gamma_0$ for all times, and the kinetic energy at time $t$ always stays below the initial kinetic energy.
\begin{prop}
\label{prop:bdryctrl}
Let $\sigma>0$. Consider the solution to the system \eqref{eq:fbeulermodel}--\eqref{bdry_b} with initial fluid domain $\calD_0$ given by \eqref{reference domain}, where the initial velocity $u_0$ is smooth and has a small kinetic energy $K(0) \leq \frac{\sigma}{20}$. 
Then we have
\begin{align}\label{Gamma_t strip}
\Gamma_t \subset \mathbb{T}\times \left(\frac{3}{2}, \frac{5}{2}\right) \quad \text{for all}\,\, t\in[0,T)
\end{align}
and
\begin{equation}
\label{Kt_ineq}
K(t) \leq K(0) \quad \text{for all}\,\, t\in[0,T),
\end{equation}
where $T>0$ is the lifespan of the solution. 
\end{prop}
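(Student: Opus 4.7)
The plan rests on two conservation laws: the conserved energy $E(t) = K(t) + \sigma L(t) = E(0)$ of Proposition~\ref{prop: conserved}, and the volume conservation $|\calD_t| = |\calD_0| = 4$ (which follows from $\nabla\cdot u = 0$ and the kinematic boundary condition \eqref{eq:kinematic}). The strategy is to use the first to bound the length of $\Gamma_t$, then to convert this length bound into a bound on the vertical oscillation of $\Gamma_t$ via a purely geometric argument, and finally to use volume conservation to locate the resulting narrow strip around $x_2 = 2$.

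First I would observe that since $\Gamma_t$ is a smooth closed curve in $\mathbb{T}\times\mathbb{R}_+$ winding once around the periodic direction, projection onto the $x_1$-axis gives $L(t) \geq 2 = L(0)$. Combined with $E(t) = E(0)$, this immediately yields the kinetic energy bound $K(t) \leq K(0)$, which is \eqref{Kt_ineq}, together with the length estimate
\[
L(t) \leq L(0) + \frac{K(0)}{\sigma} \leq 2 + \frac{1}{20}.
\]

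The main step is the following geometric inequality: for any smooth closed curve $\Gamma\subset \mathbb{T}\times\mathbb{R}_+$ winding once around $\mathbb{T}$, with $h^+ := \max_\Gamma x_2$ and $h^- := \min_\Gamma x_2$, one has $L(\Gamma)\geq 2\sqrt{1 + (h^+-h^-)^2}$. To prove it, pick points $P^\pm\in \Gamma$ realizing $h^\pm$, split $\Gamma$ into the two arcs joining $P^-$ to $P^+$ (going around $\mathbb{T}$ in opposite directions), and lower-bound each arc length by the straight-line distance between its endpoints in an unfolded fundamental domain. If the horizontal separation is $d\in[0,2]$, the two arcs then have length at least $\sqrt{d^2 + (h^+-h^-)^2}$ and $\sqrt{(2-d)^2 + (h^+-h^-)^2}$; minimizing their sum over $d$ yields the bound at $d = 1$. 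Combining this inequality with $L(t)\leq 2 + \tfrac{1}{20}$ gives $h^+ - h^- \leq \tfrac{9}{40}$.

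Finally, I would use volume conservation to locate the strip. Since $\calD_t$ is connected and bounded, with boundary decomposing into $\Gamma_b \subset \mathbb{T}\times\{0\}$ and $\Gamma_t\subset \mathbb{T}\times[h^-, h^+]$, we obtain the sandwich
\[
\mathbb{T}\times[0, h^-] \;\subseteq\; \calD_t \;\subseteq\; \mathbb{T}\times[0, h^+].
\]
Taking areas and using $|\calD_t|=4$ forces $h^- \leq 2 \leq h^+$, which combined with $h^+ - h^- \leq 9/40 < 1/2$ yields $3/2 < h^- \leq 2 \leq h^+ < 5/2$, proving \eqref{Gamma_t strip}. The most delicate point I anticipate is the sandwich inclusion above, which requires knowing that $\calD_t$ is exactly the region between $\Gamma_b$ and $\Gamma_t$ with no detached pockets; I would justify this by a short connectedness/continuity argument, using that $\calD_0$ is connected and the flow map is a smooth diffeomorphism throughout the lifespan.
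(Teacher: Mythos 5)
Your proposal is correct and takes essentially the same approach as the paper: use the conserved energy $E(t)=K(t)+\sigma L(t)$ together with $L(t)\geq 2$ to get $K(t)\leq K(0)$ and $L(t)\leq 2.05$, then combine a length-versus-vertical-oscillation inequality for closed curves winding once around $\mathbb{T}$ with the area constraint $|\calD_t|=|\calD_0|$. The only cosmetic difference is that you state and prove the geometric estimate $L(\Gamma)\geq 2\sqrt{1+(h^+-h^-)^2}$ as a standalone bound yielding $h^+-h^-\leq 9/40$ directly, whereas the paper phrases the same inequality as a proof by contradiction (if $\Gamma_t$ touched $x_2=3/2$ or $x_2=5/2$ while also crossing $x_2=2$, its length would exceed $\sqrt{5}$).
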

\begin{proof}
In light of \eqref{conserved energy} in Proposition~\ref{prop: conserved}, we obtain
\begin{equation}\label{energy_eq}
K(t) + \sigma L(t) = K(0) + \sigma L(0)
\end{equation}
for all times during the lifespan of the solution.  
Since $K(t)\geq 0$, we have
\begin{align}\label{estimate l(t)}
L(t) = L(0)+ \frac{K(0) - K(t)}{\sigma} \leq L(0) + \frac{K(0)}{\sigma} \leq 2.05,
\end{align}
 where the last inequality follows from the assumptions \eqref{reference domain} (so $L(0)=2$) and $K(0) \leq \frac{\sigma}{20}$.

  Also, we deduce from the incompressibility that $\calD_t$ has the same area as $\calD_0$, so during the lifespan of the solution, $\Gamma_t$ must intersect with $\Gamma_0=\mathbb{T}\times\{2\}$ at least once. In addition, $\Gamma_t$ is a closed curve in $\mathbb{T}\times\mathbb{R}_+$, and its projection onto the $x_1$ axis is the whole set $\mathbb{T}$. 
  
  For any closed curve satisfying the properties above, if it intersects either $\mathbb{T}\times\{\frac32\}$ or  $\mathbb{T}\times\{\frac52\}$, an elementary computation shows that it must have length at least $2\sqrt{1+(\frac12)^2}=\sqrt{5}\approx 2.236$. Since the length of $\Gamma_t$ stays below 2.05 for all times due to \eqref{estimate l(t)}, we conclude that $\Gamma_t$ must be contained in $\mathbb{T}\times \left(\frac{3}{2}, \frac{5}{2}\right)$ for all times, which proves \eqref{Gamma_t strip}.
  
To show \eqref{Kt_ineq}, note that \eqref{energy_eq} gives $K(t) = K(0) + \sigma(L(0)-L(t))$, where $L(0)=2$. During the lifespan of the solution, the projection of $\Gamma_t$ onto the $x_1$ axis is the whole set $\mathbb{T}=[-1,1)$, thus $\Gamma_t$ has length at least 2. This yields $L(t)\geq 2= L(0)$, thus $K(t)\leq K(0)$.
\end{proof}

\subsection{Error estimates of an approximate Biot-Savart law}
\label{sec_approx}

As we have described in the introduction, a major issue in obtaining pointwise velocity estimates in the free boundary setting is the lack of Biot-Savart law, namely, to determine $u(t,\cdot)$, it is not sufficient to know $\omega(t,\cdot)$ and $\calD_t$. To overcome this challenge, we introduce an ``approximate Biot-Savart law'' which only uses the information of $\omega(t,\cdot)$ in the set $\Omega:= \T \times [0,1]$, which leads to an approximate velocity field $U(t,\cdot)$ in $\Omega$. We will then use the uniform-in-time estimates in Proposition~\ref{prop:bdryctrl} to obtain a precise estimate on the error between the actual velocity $u$ and the approximate velocity field $U$ -- it turns out the error is quite regular and small near the origin.

Recall the notations $\Omega:= \T \times [0,1]$,  and $B_r$ as the open disk centered at the origin with radius $r$. We emphasize that as long as the initial kinetic energy is small, we have $\Omega \subset \calD_t$ for all times during the lifespan of the solution due to Proposition~\ref{prop:bdryctrl}.

For any $t\geq 0$ during the lifespan of the solution, we define an \emph{approximate velocity field} $U(t,\cdot): \Omega\to\mathbb{R}^2$ as
\begin{equation}\label{def_U}
U(t,\cdot) := \nabla^\perp \Phi(t,\cdot)\quad\text{ in }\Omega,
\end{equation}
where $\Phi(t,\cdot)$ solves the following elliptic equation at the fixed time $t$:
\begin{equation}
\label{eq:streamfnc}
\begin{cases}
\Delta \Phi(t,\cdot) = \omega (t,\cdot) &\text{ in }\Omega\\
\Phi(t,\cdot) = 0 &\text{ on }\p \Omega,
\end{cases}
\end{equation}
where $\omega(t,\cdot)=\nabla^\perp \cdot u(t,\cdot)$ is the vorticity of the solution $u(t,\cdot)$.

Note that $U(t,\cdot)$ is uniquely determined by $\omega(t,\cdot)|_\Omega$ using the usual Biot-Savart law for 2D Euler equation in the fixed domain $\Omega$, hence the name ``approximate Biot-Savart law''. To estimate the error between $U$ and the actual velocity field $u(t,\cdot)|_\Omega$ restricted to $\Omega$ (note that $u|_\Omega$ is well defined since $\Omega \subset\calD_t$ for all times by Proposition 3.1), we define the error  $e(t,\cdot): \Omega\to\mathbb{R}^2$ as
\begin{equation}\label{def_e}
e(t,\cdot) := u(t,\cdot)|_\Omega - U(t,\cdot).
\end{equation}

The following proposition plays a key role in our proof of small scale creation. It says that the error $e$ is very regular in $B_{1/2}\cap \Omega$, and $\nabla e$ is pointwise bounded above by $C\sqrt{K(0)}$ for all times. (In fact, the same estimate holds for any higher derivative of $e$, at the expense of having a larger $C$ -- but controlling the first derivative of $e$ is sufficient for us.)

\begin{prop}\label{prop_e}
Let $\sigma>0$. Consider the solution to the system \eqref{eq:fbeulermodel}--\eqref{bdry_b} with initial fluid domain $\calD_0$ given by \eqref{reference domain}, where the initial velocity $u_0$ is smooth and has small kinetic energy $K(0) \leq \frac{\sigma}{20}$. 
During the lifespan of the solution, let $U(t,\cdot)$ and $e(t,\cdot)$ be defined as in \eqref{def_U} and \eqref{def_e} respectively. 

Then $e(t,\cdot)$ is smooth in $B_{1/2}\cap \Omega$ up to its boundary, and there exists a universal constant $C$ such that
\begin{equation}
\label{est_e}
\|\nabla e(t,\cdot)\|_{L^\infty(B_{1/2}\cap \Omega))} \leq C \sqrt{K(0)}.
\end{equation}

\end{prop}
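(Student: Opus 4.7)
The key observation is that the error $e := u - U$ is both divergence-free and curl-free in $\Omega$: the former because $\nabla\cdot u = 0$ in $\calD_t\supset\Omega$ while $\nabla\cdot\nabla^\perp\Phi \equiv 0$; the latter because $\nabla^\perp\cdot u = \omega$ in $\Omega$ while $\nabla^\perp\cdot(\nabla^\perp\Phi) = \Delta\Phi = \omega$ by definition of $\Phi$. Hence $e$ is a harmonic vector field on $\Omega$, and my plan is to write $e = \nabla^\perp h$ for a harmonic scalar $h$ whose boundary data can be directly controlled by the kinetic energy, and then invoke interior regularity for harmonic functions.

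To produce such an $h$, I would first introduce a stream function $\psi$ for $u$ on $\Omega$. Since $u_2 = 0$ on $\Gamma_b$, integrating $\nabla\cdot u = 0$ over $\T\times[0,x_2]$ yields $\int_\T u_2(x_1,x_2)\,dx_1 = 0$ for every $x_2\in[0,1]$, which allows me to take a single-valued, $x_1$-periodic $\psi$ with $u = \nabla^\perp\psi$ in $\Omega$, normalized so that $\psi\equiv 0$ on $\Gamma_b$ (possible since $\partial_1\psi = -u_2 = 0$ there). Setting $h := \psi - \Phi$, one checks that $e = \nabla^\perp h$, $\Delta h = \omega - \omega = 0$, $h = 0$ on $\Gamma_b$, and $h(x_1,1) = \psi(x_1,1) = \int_0^1 u_1(x_1,s)\,ds$ on the top (using $\Phi|_{\partial\Omega} = 0$ and $\partial_2\psi = u_1$). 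By Cauchy-Schwarz and Proposition~\ref{prop:bdryctrl},
\begin{equation*}
\|\psi(\cdot,1)\|_{L^2(\T)}^2 \le \|u_1\|_{L^2(\Omega)}^2 \le \|u\|_{L^2(\calD_t)}^2 = 2K(t) \le 2K(0).
\end{equation*}

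Finally, I would extend $h$ oddly across $\Gamma_b$ by setting $\tilde h(x_1,x_2) := -h(x_1,-x_2)$ for $x_2\in[-1,0]$. Since $h = 0$ on $\Gamma_b$, $\tilde h$ is harmonic on the enlarged strip $\T\times[-1,1]$ with boundary data $\pm\psi(\cdot,1)$ on $x_2 = \pm 1$. Since $\overline{B_{1/2}}$ sits at distance at least $1/2$ from both $\T\times\{1\}$ and $\T\times\{-1\}$, standard interior regularity for harmonic functions on the strip (for example, via the explicit Fourier-series Poisson kernel in $x_1$) gives $\|\tilde h\|_{C^k(B_{1/2})} \le C_k\|\psi(\cdot,1)\|_{L^2(\T)}$ for any $k\ge 0$. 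Taking $k=2$ produces $\|\nabla e\|_{L^\infty(B_{1/2}\cap\Omega)} = \|\nabla\nabla^\perp h\|_{L^\infty(B_{1/2}\cap\Omega)} \le C\sqrt{K(0)}$, which is \eqref{est_e}; smoothness of $e$ up to the boundary in $B_{1/2}\cap\Omega$ is inherited from the smoothness of $\tilde h$ in any interior subset of the extended strip. The main technical points --- verifying single-valuedness of $\psi$ on the periodic cylinder (hinging on the zero-flux identity $\int_\T u_2\,dx_1 = 0$) and justifying the odd reflection (requiring $\psi|_{\Gamma_b} = \Phi|_{\Gamma_b} = 0$) --- are routine bookkeeping once the no-flow condition and the normalization of $\psi$ are carefully set up, and once the harmonic structure of $e$ is exposed via $h$, the interior regularity step is straightforward.
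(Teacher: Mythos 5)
Your proof is correct, but it takes a genuinely different route from the paper to bound the harmonic stream function of $e$. In the paper's argument, one writes $e = \nabla^\perp F$ directly (your $h$ is the same function $F$), and then exploits the $L^2(\Omega)$ \emph{orthogonality} $\int_\Omega \nabla\Phi\cdot\nabla F\,dx = 0$, which holds because $F$ is harmonic and $\Phi$ vanishes on \emph{all} of $\partial\Omega$; this instantly gives $\|\nabla\Phi\|_{L^2(\Omega)}^2 + \|\nabla F\|_{L^2(\Omega)}^2 = \|u\|_{L^2(\Omega)}^2 \le 2K(0)$, and then Poincar\'e (via $F|_{\Gamma_b}=0$), odd reflection across $\Gamma_b$, and interior elliptic regularity in $B_1 \subset \tilde\Omega$ finish the job. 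You instead introduce a stream function $\psi$ for $u$ itself (whose single-valuedness you correctly justify by the zero-flux identity $\int_\T u_2\,dx_1 = 0$), set $h := \psi - \Phi$, and compute the boundary trace $h(\cdot,1) = \int_0^1 u_1(\cdot,s)\,ds$ explicitly, controlling its $L^2(\T)$ norm by $\sqrt{2K(0)}$ via Cauchy--Schwarz and $K(t)\le K(0)$; after the same odd reflection, the strip Poisson kernel then gives the interior $C^2$ bound. What the paper's orthogonality trick buys you is that you never need to produce a stream function for $u$ or touch boundary traces at all — the $L^2$ gradient estimate drops out in one line. What your argument buys is a very concrete picture of where the error's boundary data lives (entirely on $\T\times\{1\}$) and a more elementary final step that avoids the Calder\'on--Zygmund/Sobolev chain. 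One small point worth making explicit in your write-up: the zero Fourier mode of the boundary data $\psi(\cdot,1)$ need not vanish, but its odd-in-$x_2$ harmonic extension is linear in $x_2$ and therefore still obeys the interior bound, so nothing breaks.
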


\begin{proof}Let us fix any time $t\geq 0$ during the lifespan of a solution. In the following, all functions are at this frozen time $t$, so for notational simplicity, we will omit their $t$ dependence.

First note that $e$ is divergence-free since both $U$ and $u$ are divergence-free in $\Omega$. By Helmholtz decomposition, there exists a stream function $F:\Omega\to\mathbb{R}$ such that 
\[e = \nabla^\perp F.
\]  
Moreover, $e$ is also  irrotational in $\Omega$:  applying $\nabla^\perp\cdot$ on both sides of \eqref{def_e} (and using the definition of $U$ in \eqref{def_U}--\eqref{eq:streamfnc}), we have
$$
\nabla^\perp \cdot e = \nabla^\perp \cdot u - \nabla^\perp\cdot\nabla^\perp \Phi = \omega - \omega = 0\quad\text{ in } \Omega.
$$
This leads to
\[
\Delta F = \nabla^\perp \cdot e = 0 \quad\text{ in } \Omega.
\] 

In addition, on $\Gamma_b = \mathbb{T}\times\{0\}$, we have $U \cdot n = \nabla^\perp \Phi\cdot n=0$ (from the boundary condition $\Phi=0$ on $\Gamma_b$) and $u\cdot n=0$ by \eqref{bdry_b}. Thus we must also have 
\begin{equation}\label{en}
\nabla^\perp F\cdot n = e \cdot n = (u-U)\cdot n = 0\quad \text{ on }\Gamma_b.
\end{equation} This implies $F=\text{const}$ on $\Gamma_b$, and by adding a constant to $F$ we have $F(t,\cdot)=0$ on $\Gamma_b$ without loss of generality. 
Combining the above, we have shown that there exists $F(t,\cdot):\Omega\to\mathbb{R}$ such that $e(t,\cdot) = \nabla^\perp F(t,\cdot)$, where $F$ satisfies
\begin{equation}
\label{eq:remainderstream}
\begin{cases}
\Delta F(t,\cdot)= 0& \text{ in }\Omega,\\
F(t,\cdot) = 0& \text{ on }\Gamma_b.
\end{cases}
\end{equation}

To show the regularity estimate \eqref{est_e}, we first prove a bound for $\|F\|_{H^1(\Omega)}$. We observe the following orthogonality property between $\nabla\Phi$ and $\nabla F$ in $\Omega$:
\begin{align*}
\int_\Omega \nabla\Phi \cdot \nabla F dx &= -\int_\Omega \Phi \underbrace{\Delta F}_{=0} dx + \int_{\p \Omega} \underbrace{\Phi}_{=0} (n\cdot\nabla F) dS(x) = 0,
\end{align*}
where we used $F$ being harmonic in $\Omega$ and the boundary condition of $\Phi$. Here, $dS(x)$ denotes the induced surface measure on $\p\Omega$. Taking advantage of the orthogonality and the \textit{a priori} bound \eqref{Kt_ineq} for the kinetic energy $K(t)$, we have
\begin{align*}
\int_{\Omega} |\nabla \Phi(t,x)|^2 + |\nabla F(t,x)|^2 dx &= \|u(t,\cdot)\|_{L^2(\Omega)}^2 \le 2K(t) \le 2K(0),
\end{align*}
from which one obtains $\|\nabla F(t,\cdot)\|_{L^2(\Omega)}^2 \le 2K(0)$. Now, since $F \equiv 0$ on $\Gamma_b$, we infer from Poincar\'e inequality  that
\begin{equation}
\label{est:corrH1}
\|F(t,\cdot)\|_{H^1(\Omega)}^2 \le C K(0)
\end{equation}
for some universal constant $C$.
To show the $C^{2}$ bound for $F$, recall that $F$ is harmonic in $\Omega=\mathbb{T}\times[0,1]$ and satisfies the boundary condition $F = 0$ on $\Gamma_b$. Let us oddly extend $F$ to $\tilde \Omega:=\mathbb{T}\times[-1,1]$:
$$
\tilde{F}(x) := \begin{cases}
F(x_1,x_2),& x_2 \ge 0,\\
-F(x_1,-x_2),& x_2 < 0
\end{cases} \quad\text{ for }x=(x_1,x_2)\in\tilde\Omega.
$$
By the Schwarz Reflection Principle for real harmonic functions, $\tilde F:\tilde \Omega\to\mathbb{R}$ is harmonic in $\tilde \Omega$, thus it is also harmonic in the unit disk $B_1 \subset \tilde \Omega$, and obeys the bound \[\|\tilde{F}\|_{H^1(B_1)}^2 \le \|\tilde{F}\|_{H^1(\tilde \Omega)}^2 = 2 \|F\|_{H^1( \Omega)}^2 \leq 2C K(0)\] by \eqref{est:corrH1}. Applying the standard Calder\'on-Zygmund estimate (see \cite[Chapter 2]{Fern_ndez_Real_2022}) and Sobolev embedding, we conclude that
$$
\|\tilde F\|_{C^{2}(B_{1/2})} \lesssim \|\tilde F\|_{H^4(B_{1/2})} \lesssim \|\tilde{F}\|_{H^1(B_1)}\le C \sqrt{K(0)},
$$
which finishes the proof of  \eqref{est_e} after recalling $\nabla e=\nabla (\nabla^\perp \tilde F)$ in $B_{1/2}\cap \Omega$.
\end{proof}

Note that Proposition~\ref{prop_e} only uses the smallness of initial kinetic energy, and we have not used the symmetry of initial velocity yet.  Under additional symmetry assumptions in Lemma~\ref{lem: symmetry}, we arrive at the following:

\begin{prop}
\label{prop:tildeu}
Let the initial data of the system \eqref{eq:fbeulermodel}--\eqref{bdry_b} satisfy the assumptions of both Proposition~\ref{prop_e} and Lemma~\ref{lem: symmetry}.  Then $e_1(t,\cdot)$ is odd in $x_1$ and $e_2(t,\cdot)$ is even in $x_1$ for all times during the lifespan of the solution. Moreover, there exists a universal constant $C$ such that for any $x \in B_{1/2}\cap \Omega$, we have
\begin{align}
\label{est:tildeu}
|e_j(t,x)| \leq C\sqrt{K(0)} |x_j|,\quad j = 1,2.
\end{align}
\end{prop}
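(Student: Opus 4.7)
The plan is to combine two ingredients. First, I would derive the parity of $e$ in $x_1$ from the corresponding parity of $u$ given by Lemma~\ref{lem: symmetry}, together with the fact that $\omega$ is odd in $x_1$. Second, I would integrate the gradient bound from Proposition~\ref{prop_e} along suitable coordinate segments starting from a set on which the relevant component of $e$ vanishes.

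For the symmetry: under the hypotheses of Lemma~\ref{lem: symmetry}, the vorticity $\omega = \partial_1 u_2 - \partial_2 u_1$ is odd in $x_1$. The fixed domain $\Omega = \mathbb{T}\times[0,1]$ is invariant under the reflection $x_1\mapsto -x_1$, and the Dirichlet data for $\Phi$ in \eqref{eq:streamfnc} vanish, so the function $(x_1,x_2)\mapsto -\Phi(t,-x_1,x_2)$ solves exactly the same boundary value problem as $\Phi(t,x_1,x_2)$. By uniqueness, $\Phi$ must be odd in $x_1$, which forces $U_1 = -\partial_2\Phi$ to be odd and $U_2 = \partial_1\Phi$ to be even in $x_1$. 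The same symmetry then passes to $e = u - U$.

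For the pointwise bounds, the oddness of $e_1$ in $x_1$ yields $e_1(t,0,x_2) = 0$, while the identity $e\cdot n = 0$ on $\Gamma_b$ already established in \eqref{en} gives $e_2(t,x_1,0) = 0$. For any $x = (x_1,x_2)\in B_{1/2}\cap\Omega$, both endpoints $(0,x_2)$ and $(x_1,0)$ also lie in $B_{1/2}\cap\Omega$, and since $B_{1/2}$ is convex and $\Omega = \mathbb{T}\times[0,1]$ is a product, the horizontal segment from $(0,x_2)$ to $x$ and the vertical segment from $(x_1,0)$ to $x$ both remain inside $B_{1/2}\cap\Omega$. The fundamental theorem of calculus combined with \eqref{est_e} then yields
\[
|e_j(t,x)| \le \|\nabla e(t,\cdot)\|_{L^\infty(B_{1/2}\cap\Omega)}\,|x_j| \le C\sqrt{K(0)}\,|x_j|, \qquad j=1,2.
\]

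I do not anticipate a serious obstacle: once the symmetry of $\Phi$ is secured via the uniqueness argument above, the estimate is a short consequence of Proposition~\ref{prop_e} and the fundamental theorem of calculus. The only mild care needed is verifying that the two coordinate segments lie in the regularity region $B_{1/2}\cap\Omega$, which is immediate from convexity.
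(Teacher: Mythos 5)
Your proposal is correct and follows essentially the same route as the paper: deduce oddness of $\Phi$ in $x_1$ from the odd-in-$x_1$ vorticity and uniqueness of the Dirichlet problem on $\Omega$, pass this parity to $U = \nabla^\perp\Phi$ and hence to $e = u - U$, then use $e_1(t,0,x_2)=0$ (oddness) and $e_2(t,x_1,0)=0$ (from \eqref{en}) together with the gradient bound \eqref{est_e} and the fundamental theorem of calculus. The only thing worth making explicit is that $\omega|_\Omega$ is well-defined and odd because $\Omega\subset\calD_t$ for all times, which is guaranteed by Proposition~\ref{prop:bdryctrl} under the stated assumptions; otherwise the argument, including the check that the two coordinate segments remain in $B_{1/2}\cap\Omega$, is exactly what is needed.
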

\begin{proof}
Using Lemma~\ref{lem: symmetry}, we know $u_1(t,x)$ is odd in $x_1$ and $u_2(t,x)$ is even in $x_1$ for all times in $\calD_t$, thus $\omega(t,x)$ is odd in $x_1$ in $\calD_t$. Since $\Omega\subset \calD_t$ by Proposition~\ref{prop:bdryctrl}, this immediately implies that $\Phi(t,x)$ in \eqref{eq:streamfnc} is also odd in $x_1$ in $\Omega$, due to uniqueness of solution of \eqref{eq:streamfnc}. Using $U(t,x)=\nabla^\perp\Phi(t,x)$,  $U_1(t,x)$ is odd in $x_1$ and $U_2(t,x)$ is even in $x_1$ for all times. Recalling $e = u - U$, we know $e_1(t,x)$ and $e_2(t,x)$ must satisfy the asserted symmetries in $\Omega$.

To show the estimate \eqref{est:tildeu}, let us fix any $x \in B_{1/2}\cap \Omega$. First using the fact that $e_1(t, 0,x_2) = 0$ for $x_2 \in [0,1]$ due to symmetry, we have
$$
|e_1(t,x)| = \left|e_1(t, x_1,x_2) - e_1(t, 0,x_2)\right| \le \|\p_1e_1\|_{L^\infty( B_{1/2}\cap \Omega)} |x_1| \le C \sqrt{K(0)} |x_1|,
$$
where we used \eqref{est_e} in the last inequality. On the other hand, since $e_2(t,x_1,0)= 0$ for $x_1 \in \T$ by \eqref{en}, a similar argument to the above yields \eqref{est:tildeu} with $j = 2$.
\end{proof}

\subsection{Estimating $u$ using integral of $\omega$}
With the error estimate above, we are finally ready to state and prove a pointwise velocity estimate that parallels the lemmas in \cite[Lemma 3.1]{kiselev2014small} and \cite[Lemma 2.1]{zlatovs2015exponential}. In the following, let $\Omega_+ := \T_+ \times [0,1]$. 

\begin{prop}
\label{prop:key}
Let the initial data of the system \eqref{eq:fbeulermodel}--\eqref{bdry_b} satisfy the assumptions of both Proposition~\ref{prop_e} and Lemma~\ref{lem: symmetry}. Then for any $x \in B_{1/2}\cap \Omega_+$, the following holds for all time during the lifespan of the solution $[0, T)$:
\begin{equation}
\label{eq:key}
u_j(t,x) = (-1)^j\frac{4}{\pi}\left(\int_{Q(2x)}\frac{y_1y_2}{|y|^4}\omega(t,y)dy + B_j(t,x)\right)x_j,\quad j = 1,2,
\end{equation}
where $Q(x) := [x_1,1] \times [x_2,1]$, and  $B_1$ and $B_2$ satisfies
\begin{equation}
\begin{split}
|B_1(t,x)| \le C_0\left( \|\omega_0\|_{L^\infty(\Omega)}\left(1 + \log\left(1 + \frac{x_2}{x_1}\right)\right) + \sqrt{K(0)}\right),\label{est:r1}\\
|B_2(t,x)| \le C_0\left(\|\omega_0\|_{L^\infty(\Omega)}\left(1 + \log\left(1 + \frac{x_1}{x_2}\right)\right) + \sqrt{K(0)}\right)
\end{split}
\end{equation}
for some universal constant $C_0$.
\end{prop}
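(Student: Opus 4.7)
My proof plan closely parallels \cite[Lemma 3.1]{kiselev2014small}, with extra care needed to absorb the error $e = u - U$ arising from the lack of an exact Biot-Savart law in the free boundary setting.

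\textbf{Step 1: Reduction to a Dirichlet problem on the unit square $\Omega_+$.} I would first write $u_j = U_j + e_j$. By Lemma~\ref{lem: symmetry}, $\omega(t,\cdot)$ is odd in $x_1$ on $\calD_t$, and by periodicity $\omega$ also vanishes on $\{x_1 = \pm 1\}$. Uniqueness for \eqref{eq:streamfnc} then forces $\Phi(t,\cdot)$ to be odd in $x_1$, so $\Phi$ vanishes on $\{x_1 = 0\}$ and on $\{x_1 = \pm 1\}$, in addition to $\{x_2 = 0,1\}$ where it vanishes by construction. Restricted to $\Omega_+ = [0,1]^2$, $\Phi$ therefore solves the standard Dirichlet problem
\[
\Delta \Phi = \omega \ \text{in}\ \Omega_+, \qquad \Phi = 0 \ \text{on}\ \partial \Omega_+,
\]
with $U_1 = -\partial_2\Phi$ and $U_2 = \partial_1\Phi$ on $\Omega_+$. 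This is exactly the geometry considered in \cite[Lemma 3.1]{kiselev2014small}.

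\textbf{Step 2: Kiselev-\v{S}ver\'ak kernel estimate for $U_j$.} I would invoke \cite[Lemma 3.1]{kiselev2014small} (or reprove it by constructing the Green's function for $\Omega_+$ via odd reflection across each of the four sides and then carefully splitting the Biot-Savart integral) to obtain, for $x \in B_{1/2}\cap \Omega_+$,
\[
U_j(t,x) = (-1)^j\frac{4}{\pi}\left(\int_{Q(2x)}\frac{y_1 y_2}{|y|^4}\omega(t,y)\,dy + \widetilde B_j(t,x)\right) x_j,
\]
where $\widetilde B_j$ satisfies the logarithmic part of \eqref{est:r1}. Here the $L^\infty$ control on $\omega$ is supplied by Proposition~\ref{prop:vortconserv}: $\|\omega(t,\cdot)\|_{L^\infty(\Omega_+)} \leq \|\omega(t,\cdot)\|_{L^\infty(\calD_t)} = \|\omega_0\|_{L^\infty(\calD_0)}$.

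\textbf{Step 3: Absorbing the error $e_j$ into $B_j$.} By Proposition~\ref{prop:tildeu}, $|e_j(t,x)|\leq C\sqrt{K(0)}\,|x_j|$ on $B_{1/2}\cap\Omega$, so setting
\[
B_j(t,x) := \widetilde B_j(t,x) + \frac{(-1)^j\pi\,e_j(t,x)}{4\,x_j}
\]
rewrites $u_j = U_j + e_j$ in the exact form \eqref{eq:key}, and the extra $C\sqrt{K(0)}$ summand in \eqref{est:r1} is recovered.

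The main obstacle is Step 2, which requires a delicate image decomposition of the Dirichlet Green's function on $[0,1]^2$ and a splitting of the resulting Biot-Savart integral into the explicit main term over $Q(2x)$ and several logarithmically-bounded remainder strips. Because $\Omega_+$ coincides with the unit square treated in \cite{kiselev2014small}, this calculation can be imported essentially verbatim. The genuine novelties in our free-boundary setting are in Step 1 (using oddness in $x_1$ together with periodicity to collapse the problem to their Dirichlet setting on $\Omega_+$) and Step 3 (using Proposition~\ref{prop:tildeu} to absorb the approximate-Biot-Savart error into a harmless $\sqrt{K(0)}$ remainder).
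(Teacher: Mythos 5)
Your proposal follows essentially the same route as the paper: decompose $u = U + e$, bound the error via Proposition~\ref{prop:tildeu}, and estimate $U$ by a Biot--Savart representation over a symmetric domain obtained by odd reflection. Step 1 (the reduction showing $\Phi$ vanishes on all four sides of $\Omega_+ = [0,1]^2$) and Step 3 (absorbing $e_j$ into $B_j$ with a harmless $C\sqrt{K(0)}$ contribution) are both correct and match the paper's logic. The one issue is the citation in Step 2: \cite[Lemma 3.1]{kiselev2014small} is proved for the unit \emph{disk}, not the unit square, so the claim that ``this is exactly the geometry considered in \cite[Lemma 3.1]{kiselev2014small}'' is not accurate and cannot be ``imported essentially verbatim.'' The relevant result for your setting is \cite[Lemma~2.1]{zlatovs2015exponential}, which is proved on $\mathbb{T}^2$ with odd--odd vorticity --- equivalent, by the method of images, to the Dirichlet problem on $[0,1]^2$. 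The paper in fact makes this identification explicit by oddly extending $\omega$ in $x_2$ to $\tilde\omega$ on $\mathbb{T}^2$, solving $\Delta\Psi = \tilde\omega$ on $\mathbb{T}^2$, showing $\Psi = \Phi$ on $\Omega$, and only then invoking Zlato\v{s}'s estimate. Your parenthetical fallback (``reprove it by constructing the Green's function for $\Omega_+$ via odd reflection across each of the four sides'') is precisely this argument, so the proof goes through once the reference is corrected; without that fallback, the direct appeal to Kiselev--\v{S}ver\'ak would be a gap.
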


\begin{proof}
 Recall that  \eqref{def_e} gives
\[
u(t,x) = U(t,x) + e(t,x)\quad\text{ for }x\in B_{1/2}\cap \Omega_+, t\in[0,T).
\]
In Proposition~\ref{prop:tildeu}, we have already obtained an estimate for the error term $e(t,x)$, namely 
\[ |e_j(t,x)|\leq C\sqrt{K(0)} x_j,\quad \text{ for }j=1,2.
\] (Note that $x_1, x_2\geq 0$ since $x \in B_{1/2}\cap \Omega_+$). Therefore to show \eqref{eq:key}--\eqref{est:r1}, it suffices to prove that
\begin{equation}
\label{eq:key2}
U_j(t,x) = (-1)^j\frac{4}{\pi}\left(\int_{Q(2x)}\frac{y_1y_2}{|y|^4}\omega(t,y)dy + \tilde{B}_j(t,x)\right)x_j,\quad j = 1,2,
\end{equation}
where $\tilde{B}_1, \tilde{B}_2$ satisfy \eqref{est:r1} without the terms $\sqrt{K(0)}$ on the right hand side. 

To show this, let $\tilde\omega$ be the odd-in-$x_2$ extension of $\omega$ from $\Omega=\mathbb{T}\times[0,1]$ to $\mathbb{T}^2$, i.e.
\[
\tilde{\omega}(t,x_1,x_2) := 
\begin{cases}
\omega (t,x_1,x_2 - 2n_2),& x_2 \in (2n_2,1 + 2n_2)\\
-\omega (t,x_1,-(x_2 - 2n_2)),& x_2 \in (-1+ 2n_2, 2n_2)
\end{cases}\quad\text{ for all }n_2\in\mathbb{Z}.
\]
Since $\tilde\omega$ is odd in $x_1$ (by Lemma~\ref{lem: symmetry}) and odd in $x_2$ (by definition of $\tilde\omega$), there exists a unique odd-odd solution $\Psi(t,\cdot)$ to the equation
\begin{equation}\label{def_psi}
\Delta \Psi(t,\cdot) = \tilde{\omega}(t,\cdot)\quad \text{ in }\T^2,
\end{equation}
and $\Psi\in C^{1,\alpha}(\T^2)$ for any $\alpha\in(0,1)$. Note that $\Psi=0$ on both $\T\times\{x_2=0\}$ and $\T\times\{x_2=1\}$ since $\tilde\omega$ is odd about both lines. This implies $\Psi = 0 = \Phi$ on $\partial \Omega$. Combining this with $\Delta \Psi = \omega = \Delta\Phi$ in $\Omega$ leads to $\Psi = \Phi$ in $\Omega$, therefore $U=\nabla^\perp \Phi = \nabla^\perp \Psi$. 

Note that for any $x \in \Omega$, we can express $\Psi(t,x)$ using the Newtonian potential as
\[
\Psi(t,x) = \frac{1}{2\pi} \sum_{n \in \Z^2}\int_{[-1,1]^2} \ln|x-y-2n| \,\tilde{\omega}(t,y) dy
\]
(note that the sum converges since $\tilde\omega$ has mean zero in $[-1,1]^2$), which leads to the following representation of $U$:
\begin{align*}
U(t,x) &= \nabla^\perp\Psi(t,x)= \frac{1}{2\pi}\sum_{n \in \Z^2}\int_{[-1,1]^2}\frac{(x_2 - y_2 - 2n_2, -x_1+y_1 + 2n_1)}{|x-y-2n|^2}\tilde{\omega}(t,y) dy.
\end{align*}
This is exactly the Biot-Savart law for 2D Euler equation in $\mathbb{T}^2$, therefore we can directly use the estimate in \cite[Lemma 2.1]{zlatovs2015exponential} to obtain \eqref{eq:key2}, where $\tilde{B}_1$ and $\tilde{B}_2$ satisfy
\begin{align}
|\tilde{B}_1(t,x)| \le C\|\omega_0\|_{L^\infty(\Omega)}\left(1 + \min\left\{\log\left(1 + \frac{x_2}{x_1}\right), x_2\frac{\|\nabla \omega(t,\cdot)\|_{L^\infty([0,2x_2]^2)}}{\|\omega_0\|_{L^\infty(\Omega)}}\right\}\right),\\
|\tilde{B}_2(t,x)| \le C\|\omega_0\|_{L^\infty(\Omega)}\left(1 + \min\left\{\log\left(1 + \frac{x_1}{x_2}\right), x_1\frac{\|\nabla \omega(t,\cdot)\|_{L^\infty([0,2x_1]^2)}}{\|\omega_0\|_{L^\infty(\Omega)}}\right\}\right)
\end{align}
for some universal constant $C$.
This finishes the proof: note that we can simply drop the second argument in the minimum to arrive at $|\tilde{B}_j(t,x)| \le C\|\omega_0\|_{L^\infty}(1 + \log((1 + \frac{x_{3-j}}{x_j}))$ for $j=1,2$.
\end{proof}

\section{Proof of the main theorem}
Once Proposition~\ref{prop:key} is established, the rest of the proof is largely parallel to the proof of \cite[Theorem 1.1]{kiselev2014small}. However, the situation is slightly more delicate here due to the presence of $\eps$: recall that our initial velocity $u_0=\eps v_0$ depends on $\eps$ since we want to show double-exponential growth can happen for arbitrarily small $\eps\ll 1$. In our proof, we need to construct $v_0$ that is independent of $\eps$, and we need to carefully justify that the double-exponential growth phenomenon happens for any small $\eps$, and quantify the growth rate (which depends on $\eps$).

\begin{proof}[Proof of Theorem~\ref{thm:smallscale}]
Recall that the initial velocity is set as $u_0=\eps v_0$ with $\eps$ sufficiently small, where $v_0\in C^\infty(\calD_0)$ is a fixed velocity field independent of $\eps$. We define $v_0$ as $v_0:=\nabla^\perp \phi$, where $\phi$ solves 
\[
\begin{cases}
\Delta\phi = f & \text{ in } \calD_0,\\
\phi=0 & \text{ on } \partial \calD_0.
\end{cases}
\]
Here $f \in C^\infty(\calD_0)$ is odd in $x_1$, and satisfies $0\leq f\leq 1$ in $\calD_0^+$ and $f(x_1, x_2)=1$ for $x_1 \in [\kappa^{10}, 1-\delta]$, where $\kappa$ and $\delta$ are small universal constants satisfying $0< \kappa < \delta < \frac12$, and they will be fixed momentarily. Since $\|f\|_{L^\infty(\calD_0)}=1$ regardless of $\kappa$ and $\delta$, a standard elliptic estimate gives $\|v_0\|_{L^2(\calD_0)}\leq \|\phi\|_{H^1(\calD_0)}\leq C$ for some universal constant $C$, which implies 
\begin{equation}\label{def_c1}
K(0) = \frac12 \|u_0\|_{L^2(\calD_0)}^2 = \frac{\eps^2}{2} \|v_0\|_{L^2(\calD_0)}^2 \leq C_1 \eps^2
\end{equation} for some universal $C_1$.
Therefore, setting $\eps_0 := (20C_1)^{-1/2}$, we have $K(0)\leq \frac{1}{20}$ for all $\eps\in(0,\eps_0)$. Thus for all $\eps \in (0,\eps_0)$, the initial velocity $u_0$ constructed as above satisfies the small kinetic energy assumption in Proposition~\ref{prop:bdryctrl} and \ref{prop_e} (recall that we set $\sigma=1$ in the assumption of Theorem~\ref{thm:smallscale}). As a result, Proposition~\ref{prop:bdryctrl} implies $\Omega \subset \calD_t$ for all $t\in[0,T)$.
 Due to the odd-in-$x_1$ symmetry of $f$, $u_0$ also satisfies the symmetry assumption in Lemma~\ref{lem: symmetry}.

Since the initial vorticity is $\omega_0 = \eps f$ in $\calD_0$, the set $\{x\in \calD_0^+: \omega_0(x) \neq \eps\}$ has area less than $2\delta$. Using the incompressibility of the flow and the conservation of $\omega$ along the flow map, for any time $t\in[0,T)$, the set $\{x\in \calD_t^+: \omega(t,x) \neq \eps\}$ has area less than $2\delta$. This fact allows us to obtain a lower bound of the integral in \eqref{eq:key}  using a similar argument as \cite[Eq.(3.15)]{kiselev2014small}: for any $t\in[0, T)$ and $x \in B_{\delta}\cap \Omega_+$,
\begin{equation}
\label{int_lower}
\int_{Q(2x)} \frac{y_1y_2}{|y|^4}\omega(y) dy \ge \frac{1}{4} \int_{2\delta}^1 \int_{\pi/6}^{\pi/3}\frac{\omega(r,\theta)}{r}d\theta dr \ge  \frac{\eps}{4}  \int_{4\sqrt{\delta}}^1 \int_{\pi/6}^{\pi/3}\frac{1}{r}d\theta dr  = \frac{\eps \pi}{48} (\log \delta^{-1} - 2\log 4),
\end{equation}
where the first inequality uses the definition of $Q(2x)$ and the fact that $\omega\geq 0$ in $Q(2x)$, and the second inequality uses $|\{x\in \calD_t^+: \omega(t,x) \neq \eps\}| < 2\delta$: in the polar integral of $\omega/r$ if we remove a set with area $2\delta$ closest to the origin from the integral domain $ \{r\in (2\delta,1), \theta\in(\frac{\pi}{6},\frac{\pi}{3})\}$ to reflect the worst-case scenario that minimizes the integral, the remaining set would have inner radius less than $4\sqrt{\delta}$. Also, applying \eqref{def_c1} together with the fact that $\|\omega(t,\cdot)\|_{L^\infty} = \|\omega_0\|_{L^\infty}=\eps$, we can control the terms $B_1$ and $B_2$ in \eqref{est:r1} as
\begin{align}
|B_1(t,x)| \leq C_0  \eps (2+\sqrt{C_1}) =: C_2\eps&\quad \text{ in } B_{\delta}\cap \Omega_+\cap \{0\leq x_2\leq x_1\} \label{B1}\\
|B_2(t,x)| \leq C_0  \eps (2+\sqrt{C_1}) =: C_2\eps &\quad \text{ on } B_{\delta}\cap \Omega_+\cap \{x_2= x_1\}. \label{B2}
\end{align}

From now on, we fix $\delta \in(0,\frac12)$ as a small universal constant such that 
\begin{equation}\label{delta_def1}
\frac{4}{\pi}\left(\frac{ \pi}{48} \left(\log \delta^{-1} - 2\log 4\right)-C_2\right)>1.
\end{equation}

With such definition, combining the estimates \eqref{eq:key} and \eqref{int_lower}--\eqref{B2}, we have 
\begin{align}
-u_1(t,x)&\geq \eps x_1 \quad \text{ in } B_{\delta}\cap \Omega_+\cap \{0\leq x_2\leq x_1\} \label{temp1} \\
u_2(t,x)&\geq \eps x_2 \quad \text{ on } B_{\delta}\cap \Omega_+\cap \{x_2= x_1\}. \label{temp2}
\end{align} In particular, \eqref{temp1} implies the flow map starting from $(\delta,0)$ (denote it by $\eta(t,\delta,0)$) satisfies 
\begin{equation}\label{temp0}
\eta_1(t,\delta,0)\leq \delta e^{-\eps t},\end{equation}
 where we used the fact that $\eta(t,\delta,0)$ stays on the bottom boundary $\Gamma_b$ for all times. Since $\omega(t, \eta(t,\delta,0)) = \omega_0(\delta,0)=\eps$, we know $\|\nabla\omega(t)\|_{L^\infty}$ at least increases exponentially for all times during the lifespan of a solution:
\[
\|\nabla\omega(t,\cdot)\|_{L^\infty(\calD_t)} \geq \frac{|\omega(t, \eta(t,\delta,0))|}{|\eta_1(t,\delta,0)|} \geq \frac{\eps}{ \delta e^{-\eps t}} = \eps \delta^{-1} e^{\eps t}.
\]

To upgrade the exponential growth to double-exponential growth, we follow the same argument as \cite{kiselev2014small}, except that we have to keep track of the dependence on $\eps$ in the growth rate.
For any $t>0$ and $x_1 \in (0,1)$, we define the following two velocities (which is well-defined since $\Omega \subset \calD_t$ for all $t\in[0, T)$):
\begin{equation}
\label{eq:lvel}
\ubar{u}_1(t,x_1) := \min_{(x_1,x_2) \in \Omega_+, x_2 < x_1} u_1(t,x_1,x_2), \quad 
\bar{u}_1(t,x_1) := \max_{(x_1,x_2) \in \Omega_+, x_2 < x_1} u_1(t,x_1,x_2),
\end{equation}
where $\ubar{u}_1$ and $\bar{u}_1$ are locally Lipschitz in $x_1$ during the lifespan of the solution. We then define the functions $a(t)$, $b(t)$ via the ODEs
\begin{align}
a'(t) &= \bar{u}_1(t,a(t)),\quad a(0) = \kappa^{10},\label{eq:aeqn}\\
b'(t) &= \ubar{u}_1(t,b(t)),\quad b(0) = \kappa.\label{eq:beqn}
\end{align}
We also define the following trapezoidal region: for $0 < x_1' < x_1'' < 1$, let
$$
\calO(x_1',x_1'') := \{(x_1,x_2) \in \Omega^+\;:\; x_1'< x_1 < x_1'', ~0\leq x_2 \leq x_1\}.
$$
And we set
$$
\calO_t := \calO(a(t),b(t)).
$$

The choice of our initial data gives $\omega_0\equiv \eps$ in $\calO_0$. We can argue in the same way as \cite[page 1215]{kiselev2014small} that $\omega(t,\cdot)\equiv \eps$ in $\calO_t$:  due to the definition of \eqref{eq:lvel}--\eqref{eq:beqn}, we only need to show $u\cdot(-1,1)>0$ along the diagonal of $\calO_t$. This is true since $u_1<0$ and $u_2>0$ on the diagonal $B_\delta\cap\Omega^+\cap \{x_1=x_2\}$, which follows from \eqref{temp1}--\eqref{temp2}.

Using \eqref{temp1}, we have 
\begin{equation}\label{b}
b(t) \leq \kappa e^{-\eps t}.
\end{equation} To obtain a faster decay for $a(t)$, note that $\log a(t)$ satisfies the differential inequality \[
\begin{split}
\frac{d}{dt}\log a(t) &= 
\frac{\bar{u}_1(t,a(t))}{a(t)} \leq -\frac{4}{\pi}  \left(\int_{Q(2a(t),2a(t))}\frac{y_1 y_2}{|y|^4} \omega(t,y) dy - C_2\eps \right)\\
& \leq  -\frac{4}{\pi} \left(\int_{Q(2a(t),0)}\frac{y_1 y_2}{|y|^4} \omega(t,y) dy - (C_2+C_3)\eps \right) .
\end{split}
\]
where the first inequality follows from \eqref{eq:key} and \eqref{B1}, and the second inequality follows from $\omega\leq\epsilon$ and the fact that for any $a<\frac12$, the integral in the rectangle $\int_{[2a,1]\times[0,2a]} \frac{y_1 y_2}{|y|^4} dy$ is bounded by a universal constant $C_3$. On the other hand, using \eqref{eq:key} and \eqref{B1}, $\log b(t)$ satisfies the differential inequality in the opposite direction:
\[
\frac{d}{dt}\log b(t) =
\frac{\ubar{u}_1(t,b(t))}{b(t)} \geq -\frac{4}{\pi}  \left(\int_{Q(2b(t),0)}\frac{y_1 y_2}{|y|^4} \omega(t,y) dy + C_2\eps \right).
\]

Subtracting them yields the following (where we use that $\calO(2a(t), b(t)) \subset Q(2a(t), 0))\setminus Q(2b(t),0))$):
\begin{equation}\label{temp3}
\frac{d}{dt} \log \frac{b(t)}{a(t)} \geq \frac{4}{\pi} \left( \int_{\calO(2a(t), b(t))} \frac{y_1 y_2}{|y|^4} \omega(t,y) dy - (2C_2+C_3)\eps \right).
\end{equation}
Using $\omega(t,\cdot)\equiv\eps$ in $\calO(2a(t), b(t)) \subset \calO_t$, we can bound the integral in \eqref{temp3} from below as
\[
\int_{\calO(2a(t), b(t))} \frac{y_1 y_2}{|y|^4} \omega(t,y) dy \geq \eps \int_0^{\pi/4} \int_{2a(t)/\cos\theta}^{b(t)/\cos\theta} \frac{\sin 2\theta}{2r} dr d\theta = \frac{\eps}{4} \left(\log\frac{b(t)}{a(t)} - \log2\right),
\]
and plugging it into \eqref{temp3} gives
\[
\frac{d}{dt} \log \frac{b(t)}{a(t)} \geq \eps\left( \frac{1}{\pi} \log \frac{b(t)}{a(t)} - C_4\right),
\]
where $C_4 := \frac{4}{\pi}(\frac{\log 2}{4}+2C_2+C_3)$ is a universal constant. Solving this differential inequality gives
\begin{equation}\label{logtemp}
\log \frac{b(t)}{a(t)} \geq \exp\left(\frac{\eps t}{\pi} \right) \left(\log\frac{b(0)}{a(0)} - \pi C_4\right).
\end{equation}

Since $\log\frac{b(0)}{a(0)} = 9\log\kappa^{-1}$, we can choose $\kappa \in (0, \delta)$ to be a sufficiently small universal constant such that 
$\log\frac{b(0)}{a(0)} - \pi C_4 > 2$. Note that such choice of $\kappa$ guarantees that $2a(t) < b(t)$ for any $t \in [0,T)$. Hence, \eqref{logtemp} implies the following (where we use $b(t)\leq 1$ for all $t$ due to \eqref{b}):
\[
a(t)^{-1} \geq \exp\left(2\exp\left(\frac{\eps t}{\pi}\right)\right) b(t)^{-1} \geq \exp\left(2\exp\left(\frac{\eps t}{\pi}\right)\right).
\]
Finally, using $\omega\equiv\eps$ in $\calO_t$, we have $\omega(t, a(t),0)=\eps$, thus combining it with $\omega(t,0,0)=0$ gives
\[
\|\nabla\omega(t,\cdot)\|_{L^\infty(\calD_t)} \geq \frac{\eps}{a(t)} \geq \eps \exp\left(2\exp\left(\frac{\eps t}{\pi}\right)\right)
\]
for all times during the lifespan of the solution.
\end{proof}

\section{Discussions}

At the end, we discuss some generalizations of Theorem~\ref{thm:smallscale},  and state some open questions.

1. \textbf{Adding gravity to the system}. When a gravity force $-ge_2$ is added to the first equation of \eqref{eq:fbeulermodel}, where $g>0$ and $e_2 = (0,1)^T$, the system becomes the 2D \emph{gravity-capillary} water wave system. Our proof can be easily adapted to this case for $g>0$ and $\sigma>0$. This is because the gravity-capillary water wave system enjoys a similar conserved energy
$
E(t) = K(t) + gP(t) + \sigma L(t),
$
where $P(t) = \int_{\calD_t} x_2 dx$ is the potential energy. It is simple to check that $P(t)\geq P(0)$ for all $t$, since among all sets with the same area as $\mathcal{D}_0$, the set $\mathcal{D}_0$ itself given by \eqref{reference domain} has the lowest potential energy. As a result, the uniform-in-time estimates in Proposition~\ref{prop:bdryctrl} still hold. One can also check that adding gravity still preserves the symmetry in Lemma~\ref{lem: symmetry}. The rest of the proof can be carried out without any changes, and we leave the details to interested readers.
 
 \medskip
2. \textbf{Removing surface tension}. It seems challenging to obtain growth results without surface tension. When $\sigma=0$, the uniform-in-time estimate \eqref{Gamma_t strip} on the free boundary fails, thus the free boundary could potentially get very close to the origin. This difficulty persists even with an additional gravity term -- for gravity water wave without surface tension, if the initial kinetic energy is small, using the conserved energy $
K(t) + gP(t) = K(0) + gP(0)$ one can prove that the free boundary stays close to $\Gamma_0$ in the $L^2$ distance for all times, however, their $L^\infty$ difference can still be large. 

\medskip
 3. \textbf{Different domains}. A natural question is whether the growth result holds for different domains. When the bottom boundary is a graph $\{x_2=g(x_1): x_1\in\mathbb{T}\}$ where $g$ is smooth and even-in-$x_1$, we expect the proof would still hold after some modifications, where the estimate of Biot-Savart law in domains with a symmetry axis by Xu \cite{xu2016fast} could be useful. However, adapting the proof to the infinite-depth case (where there is no bottom boundary) requires substantial new ideas. We also point out that our proof crucially relies on the  periodic-in-$x_1$ setting, and it is an interesting open question to prove similar results for the $x_1\in\mathbb{R}$ case for finite-energy smooth initial data.

\end{document}